\DeclareMathAlphabet{\mathpzc}{OT1}{pzc}{m}{it}
\font\sss=cmss8
\def\BZ{{\mathbb Z}}
\def\sC{\mbox{\sf C}}
\def\sD{\mbox{\sf D}}
\def\ssC{\mbox{\sss C}}
\def\add{\operatorname{add}}
\def\adots{\mathinner{\mkern1mu\raise1.0pt\vbox{\kern7.0pt\hbox{.}}\mkern2mu\raise4.0pt\hbox{.}\mkern2mu\raise7.0pt\hbox{.}\mkern1mu}}
\def\D{\mbox{\sD}}
\def\Df{\D^{\operatorname{f}}}
\def\End{\operatorname{End}}
\def\Ext{\operatorname{Ext}}
\def\Hom{\operatorname{Hom}}
\def\inf{\operatorname{inf}}
\def\mod{\mbox{\sf mod}}
\def\modulo{\operatorname{mod}}
\def\prod{\operatorname{prod}}
\def\stab{\mbox{\sf stab}}
\numberwithin{equation}{part}
\newtheorem{Lemma}{Lemma}[section]
\newtheorem{Theorem}[Lemma]{Theorem}
\newtheorem{Proposition}[Lemma]{Proposition}
\theoremstyle{definition}
\newtheorem{Remark}[Lemma]{Remark}
\newtheorem{Example}[Lemma]{Example}
\begin{document}

\setlength{\parindent}{0pt}
\setlength{\parskip}{7pt}

\title[Cluster categories and stable module categories]
{Realising higher cluster categories of Dynkin type 
as stable module categories}

\author{Thorsten Holm}
\address{Institut f\"{u}r Algebra, Zahlentheorie und Diskrete
  Mathematik, Fakult\"at f\"ur Mathematik und Physik, Leibniz
  Universit\"{a}t Hannover, Welfengarten 1, 30167 Hannover, Germany}

\email{holm@math.uni-hannover.de}
\urladdr{http://www.iazd.uni-hannover.de/\~{ }tholm}

\author{Peter J\o rgensen}
\address{School of Mathematics and Statistics,
Newcastle University, Newcastle upon Tyne NE1 7RU,
United Kingdom}
\email{peter.jorgensen@ncl.ac.uk}
\urladdr{http://www.staff.ncl.ac.uk/peter.jorgensen}



\subjclass[2010]{Primary: 16D50, 18E30; Secondary: 05E99, 13F60, 16G10,
  16G60, 16G70} 

\keywords{Finite representation type, selfinjective algebras, Dynkin
  diagrams, Morita theorem} 

\thanks{{\em Acknowledgement. }This work was carried out in the
  framework of the research priority programme SPP 1388 {\em
    Darstellungstheorie} of the Deutsche Forschungsgemeinschaft (DFG).
  We gratefully acknowledge financial support through the grant HO
  1880/4-1. }


\begin{abstract}
  
  We show that the stable module categories of certain selfinjective
  algebras of finite representation type having tree class $A_n$, $D_n$,
  $E_6$, $E_7$ or $E_8$ are triangulated equivalent to $u$-cluster 
  categories of the corresponding Dynkin type.
  The proof relies on the ``Morita'' theorem for $u$-cluster categories by
  Keller and Reiten, along with the recent computation of Calabi-Yau
  dimensions of stable module categories by Dugas. 

\end{abstract}

\maketitle

\section{Introduction}
\label{sec:introduction}

This paper deals with two types of categories: {\em Stable module
categories of selfinjective algebras} and {\em $u$-cluster
categories}.  They both originate in representation theory, and we
will establish a connection between the two by showing that some
stable module categories are, in fact, $u$-cluster categories.

{\em Stable module categories} are classical objects of representation
theory.  They arise from categories of finitely generated modules
through the operation of dividing by the ideal of homomorphisms which
factor through a projective module.  The stable module category of a
finite dimensional selfinjective algebra has the appealing property
that it is triangulated; this has been very useful not least in group
representation theory.

{\em Cluster categories} and the more general {\em $u$-cluster
  categories} which are pa\-ra\-me\-tri\-sed by the natural number $u$
were introduced over the last few years in a number of beautiful
papers: \cite{BMRRT}, \cite{CCS}, \cite{Keller}, \cite{Thomas}, and
\cite{BinZhu}.  
The idea is to provide ca\-te\-go\-ri\-fi\-ca\-ti\-ons of the theory
of cluster algebras and higher cluster complexes as introduced in
\cite{FR} and \cite{FZ}.  If $Q$ is a finite quiver without loops and
oriented cycles, then the $u$-cluster category of type $Q$ over a
field $k$ is defined by considering the bounded derived category of
the path algebra $kQ$ and taking the orbit category of a certain
autoequivalence; see Section \ref{sec:cluster} for details.  A
$u$-cluster category is triangulated; this non-trivial fact was
established in \cite{Keller}.

The introduction of cluster categories and $u$-cluster categories has
created a rush of activity which has turned these categories into a
major item of contemporary representation theory.  This is due not
least to the advent of cluster tilting theory in \cite{BMR}, which
provides a long awaited generalization of classical tilting theory
making it possible to tilt at any vertex of the quiver of a
hereditary algebra, not just at sinks and sources.

In this paper, we will show that a number of stable module categories
of selfinjective algebras are, in fact, $u$-cluster categories.

To be precise, we will look at stable module categories of
selfinjective algebras of finite representation type. By the Riedtmann
structure theorem \cite{Riedtmann2} the Auslander-Reiten (AR) quiver
of such a category has tree class of Dynkin type $A_n$, $D_n$,
$E_6$, $E_7$, or $E_8$.  We illustrate in type $A$ what this means.
Consider the Dynkin quiver in Figure \ref{fig:Dynkin_A}
which, by abuse of notation, we will often denote by $A_n$, and its
repetitive quiver $\BZ A_n$ shown in Figure \ref{fig:ZA}.
\begin{figure}
\[
  \xymatrix{ 1 \ar[r] & 2 \ar[r] & \cdots \ar[r] & n }
\]
\label{fig:Dynkin_A}
\caption{The Dynkin quiver $A_n$}
\end{figure}
\begin{figure}
\[
  \xymatrix @+0.9pc @!0 {
& & & & {} \ar[dr] & & n \ar[dr] & & *{\circ} \ar[dr] & \\
& & & & & n-1 \ar[dr] \ar[ur] & & *{\circ} \ar[dr] \ar[ur] & & \ddots \\
& & \ddots \ar[dr] & & \adots \ar[ur] & & *{\circ} \ar[dr] \ar[ur] & & \ddots & \\
& \ddots \ar[dr] & & 3 \ar[dr] \ar[ur] & & \adots \ar[ur] & & \ddots & & \\
\ddots \ar[dr] & & 2 \ar[dr] \ar[ur] & & *{\circ} \ar[dr] \ar[ur] & & & & & \\
& 1 \ar[ur] & & *{\circ} \ar[ur] & & {} & & & & \\
                      }
\]
\caption{The repetitive quiver $\BZ A_n$}
\label{fig:ZA}
\end{figure}
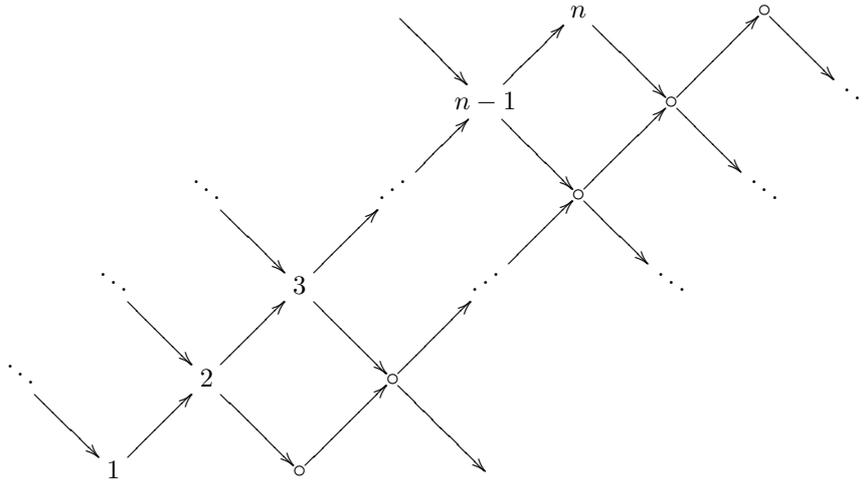
For a selfinjective algebra to have finite representation type and
tree class $A_n$ means that the AR quiver of its stable
module category is a non-trivial quotient of $\BZ A_n$ by an admissible
group of automorphisms. In type $A$, in such a
quotient, two vertical lines on the quiver are identified, and this
gives either a cylinder or a M\"{o}bius band.  According to this
dichotomy, the algebra belongs to one of two well understood classes:
the Nakayama algebras and the M\"{o}bius algebras.

For tree classes $D_n$ and $E_6$, $E_7$, $E_8$, the shapes of the
stable AR quivers are obtained in a very similar fashion; more details
on the precise shapes are given in Section \ref{sec:typeD} for type
$D$ and Section \ref{sec:typeE} for type $E$.

Now, $u$-cluster categories of Dynkin types $A_n$, $D_n$, $E_6$,
$E_7$, and $E_8$ also have AR quivers which are either cylinders or
M\"{o}bius bands; see Section \ref{sec:cluster} for details. One of
the aims of this paper is to show that this resemblance is no
coincidence.

For stating the main results of the paper we have to deal
with the various Dynkin types separately. 

Let us start with Dynkin type $A$. For integers $N, n \geq 1$, let
$B_{N,n+1}$ denote the Nakayama algebra defined as the path algebra of
the circular quiver with $N$ vertices and all arrows pointing in the
same direction modulo the ideal generated by all paths of length
$n+1$.  Moreover, for integers $p, s \geq 1$, let $M_{p,s}$ denote the
corresponding M\"obius algebra (for the definition of these algebras
by quivers and relations, see Section \ref{sec:Mobius}).

The following is our first main result which gives a complete list of
those $u$-cluster categories of type $A$ which are triangulated
equivalent to stable module categories of selfinjective algebras.

\smallskip

\noindent
{\bf Theorem A} (Realising $u$-cluster categories of type $A$).
{\it 
\begin{enumerate}

\item  Let $u \geq 2$ be an even integer and let $n \geq 1$ be an
integer. Set
$N = \frac{u}{2}(n+1) + 1.$
Then the $u$-cluster category of type $A_n$ is equivalent as a
triangulated category to the stable module category $\stab\,
B_{N,n+1}$.  

\medskip

\item
Let $u \geq 1$ be an odd integer and let $p,s \geq 1$ be
integers for which
$s(2p+1) = u(p+1) + 1.$
Then the $u$-cluster category of type $A_{2p+1}$ is equivalent as a
triangulated category to the stable module category
$\stab\, M_{p,s}$.

\end{enumerate}
}

\smallskip

We next consider Dynkin types $D$ and $E$.  The theory becomes more
intricate than in type $A$.  While two types of selfinjective algebras
occurred in type $A$, we will show that three types of algebras occur
in type $D$, and two in type $E$.  More precisely, in Asashiba's
notation from \cite[appendix]{Asashiba}, they are the algebras
$(D_n,s,1)$, $(D_n,s,2)$, and $(D_{3m},\frac{s}{3},1)$ in type $D$,
and $(E_n,s,1)$, $n=6,7,8$, and $(E_6,s,2)$ in type $E$.

Specifically, we show the following main results.

\smallskip

\noindent
{\bf Theorem D} (Realising $u$-cluster categories of type $D$).
{\it 
Let $m, n, u$ be integers with $u \geq 1$.
\begin{enumerate}

\item Suppose that $n\ge 4$ and $u\equiv -2 \,\modulo\, (2n-3)$. 

\medskip
\noindent
Then the $u$-cluster category of type $D_{n}$
is equivalent as a triangulated category to the stable module category
\[
  \left\{
    \begin{array}{ll}
      \stab\, (D_{n},\frac{u(n-1)+1}{2n-3},1) & \mbox{if $n$ or $u$ is
        even, } \\[2mm]
      \stab\, (D_{n},\frac{u(n-1)+1}{2n-3},2) & \mbox{if $n$ and $u$
        are odd. }
    \end{array}
  \right.
\]

\medskip

\item
Suppose that $m\ge 2$ and $u \equiv -2 \,\modulo\, (2m-1)$ but $u
\not\equiv -2 \,\modulo\, (6m-3)$.  Moreover suppose that not both $m$
and $u$ are odd. Then the $u$-cluster category of type $D_{3m}$ is
equivalent as a triangulated category to the stable module category $
\stab\, (D_{3m},\frac{s}{3},1) $ where $s=\frac{u(3m-1)+1}{2m-1}$.

\end{enumerate}
}

\noindent
{\bf Theorem E} (Realising $u$-cluster categories of type $E$).
{\it 
Let $u \geq 1$ be an integer.
\begin{enumerate}

\item If $u\equiv -2 \,\modulo\, 11$ then the $u$-cluster category of
  type $E_6$ is equivalent as a triangulated category to the stable
  module category
\[
  \left\{ 
    \begin{array}{ll}
      \stab(E_6,\frac{6u+1}{11},1) & \mbox{if $u$ is even,} \\[2mm]
      \stab(E_6,\frac{6u+1}{11},2) & \mbox{if $u$ is odd.} 
    \end{array}
  \right.
\]

\medskip

\item If $u\equiv -2 \,\modulo\, 17$ then the $u$-cluster category of
  type $E_7$ is equivalent as a triangulated category to the stable
  module category $\stab(E_7,\frac{9u+1}{17},1)$.

\medskip

\item If $u\equiv -2 \,\modulo\, 29$ then the $u$-cluster category of
  type $E_8$ is equivalent as a triangulated category to the stable
  module category $\stab(E_8,\frac{15u+1}{29},1)$.

\end{enumerate}
}

\smallskip

The proofs of the above theorems rely on the seminal ``Morita
theorem'' for $u$-cluster categories established by Keller and Reiten
in \cite{KellerReiten2}. The idea is to show that the stable module
categories of the relevant selfinjective algebras have very strong
formal properties in terms of their Calabi-Yau dimensions and
$u$-cluster til\-ting objects. More precisely, the Keller-Reiten
structure theorem states the following. Consider a Hom finite
triangulated category of algebraic origin (e.g.\ the stable module
category of a selfinjective algebra).  Assume that it has Calabi-Yau
dimension $u+1$ and possesses a $u$-cluster tilting object $T$ which
has hereditary endomorphism algebra $H$ and also satisfies
$\Hom(T,\Sigma^{-i}T) = 0$ for $i = 1, \ldots, u-1$ where $\Sigma$ is
the suspension functor.  Then this category is triangulated equivalent
to the $u$-cluster category of $H$.

Theorems A,\,D and E were already stated in our earlier preprints
\cite{HolmJorgensenA}, \cite{HolmJorgensenDE} which were later
withdrawn. Unfortunately there was a mistake in \cite{HolmJorgensenA},
pointed out to us by Alex Dugas, in connection with the Calabi-Yau
dimensions, and this meant there was a gap in the proofs of the main
results of \cite{HolmJorgensenA} and \cite{HolmJorgensenDE}.

In this paper we circumvent the problem and thereby provide correct
proofs of the above theorems.  This is achieved by using a recent
paper of Dugas \cite{Dugas} in which he computes the Calabi-Yau
dimensions for stable module categories of selfinjective algebras of
finite representation type.

The paper is organized as follows: Section \ref{sec:cluster} collects
the properties of $u$-cluster categories of Dynkin types $ADE$ which
we will need. Section \ref{sec:orthogonal} is a remark on
$u$-cluster tilting objects in stable module categories.  Section
\ref{sec:typeA} considers Dynkin type $A$ and proves Theorem A. This
is split into subsections \ref{sec:Nakayama} and \ref{sec:Mobius}
dealing with Nakayama algebras and M\"obius algebras; these two
situations correspond to parts (i) and (ii) of Theorem A.  Sections
\ref{sec:typeD} and \ref{sec:typeE} similarly consider Dynkin types
$D$ and $E$ and prove Theorems D and E.

Throughout, $k$ is an algebraically closed field, $A$ is a selfinjective
$k$-algebra, $\mod\,A$ denotes
the category of finitely generated right-$A$-modules, and $\stab\,A$
denotes the stable category of finitely ge\-ne\-ra\-ted
right-$A$-modules.

\section*{Acknowledgement}

We thank Claire Amiot and Bernhard Keller warmly for a number of
useful comments and suggestions to preliminary versions.

We are deeply grateful to Alex Dugas for pointing out a subtle but
serious mistake in our earlier manuscript \cite{HolmJorgensenA}, and
for many subsequent email discussions on the subject of determining
Calabi-Yau dimensions.

Work on this project started in September 2006 while the first author 
was visiting the Universit\'{e} Montpellier 2.  He thanks Claude Cibils 
for the invitation and the warm hospitality, and the R\'{e}gion
Languedoc-Roussillon and the Universit\'{e} Montpellier 2
for financial support.

\section{$u$-cluster categories}
\label{sec:cluster}

This section collects the properties of $u$-cluster categories which
we will need.

Let $Q$ be a finite quiver without loops and oriented cycles.  Consider the
path algebra $kQ$ and let $\Df(kQ)$ be the derived category of bounded
complexes of finitely generated right-$kQ$-modules.  See \cite{Happel}
for background on $\Df(kQ)$ and \cite{RVdB} for additional information
on AR theory and Serre functors.

If $u \geq 1$ is an integer, then the $u$-cluster category of type $Q$
is defined as $\Df(kQ)$ modulo the functor $\tau^{-1}\Sigma^u$, where
$\tau$ is the AR translation of $\Df(kQ)$ and $\Sigma$ the suspension.
In other words, the $u$-cluster category is the orbit category for the
action of $\tau^{-1}\Sigma^u$ on the category $\Df(kQ)$.  Denote the
$u$-cluster category of type $Q$ by $\sC$.

It follows from \cite[sec.\ 4, thm.]{Keller} that $\sC$ admits a
structure of triangulated category in a way such that the canonical
functor $\Df(kQ) \rightarrow \sC$ is triangulated.

The category $\sC$ has Calabi-Yau dimension $u + 1$ by \cite[sec.\ 
4.1]{KellerReiten2}.  That is, $n = u + 1$ is the smallest
non-negative integer such that $\Sigma^n$, the $n$th power of
the suspension functor, is the Serre functor of $\sC$.

The category $\sC$ has the same objects as the derived category
$\Df(kQ)$, so in particular, $kQ$ is an object of $\sC$.  In fact, by
\cite[sec.\ 4.1]{KellerReiten2} again, $kQ$ is a $u$-cluster tilting
object of $\sC$, cf.\ \cite[sec.\ 3]{IyamaYoshino}.  That is,
\begin{enumerate}

  \item  $\Hom_{\ssC}(kQ,\Sigma t) = \cdots = \Hom_{\ssC}(kQ,\Sigma^u t) = 0
         \; \Leftrightarrow \; t \in \add\, kQ$,

\medskip

  \item  $\Hom_{\ssC}(t,\Sigma kQ) = \cdots = \Hom_{\ssC}(t,\Sigma^u kQ) = 0
         \; \Leftrightarrow \; t \in \add\, kQ$.

\end{enumerate}
Recall that $\add\, kQ$ denotes the full subcategory of $\sC$ consisting
of direct summands of (finite) direct sums of copies of $kQ$. 

The endomorphism ring $\End_{\ssC}(kQ)$ is $kQ$ itself.

\subsection{$u$-cluster categories of Dynkin type $A$}
\label{subsec:clustera}
Let $Q$ be a Dynkin quiver of type $A_n$ for an integer $n \geq 1$.
This means that the graph obtained from $Q$ by forgetting the
orientations of the arrows is a Dynkin diagram of type $A_n$.  Recall
that the orientation of $Q$ is not important since for any two
orientations the derived categories $\Df(kQ)$ are triangulated
equivalent.  In the sequel we shall always use the linear orientation
as in Figure \ref{fig:Dynkin_A} in the introduction.
 
By \cite[cor.\ 4.5(i)]{Happel}, the AR quiver of $\Df(kQ)$ is the
repetitive quiver $\BZ A_n$; see Figure \ref{fig:ZA} in the
introduction.  Accordingly, the AR quiver of the $u$-cluster category
$\sC$ is $\BZ A_n$ modulo the action of $\tau^{-1}\Sigma^u$ by
\cite[prop.\ 1.3]{BMRRT}.

The AR translation $\tau$ of $\Df(kQ)$ acts on the quiver by shifting
one unit to the left.  Both here and below, a unit equals the distance
between two vertices which are horizontal neighbours.  Hence
$\tau^{-1}$ acts by shifting one unit to the right.

The suspension $\Sigma$ of $\Df(kQ)$ acts by reflecting in the
horizontal centre line and shifting $\frac{n+1}{2}$ units to the
right; see \cite[table p.\ 359]{MiyachiYekutieli}.  Note that this
shift makes sense for all values of $n$: If $n$ is even, then the
reflection in the horizontal centre line sends a vertex of the quiver
to a point midwise between two vertices, and the half integer shift by
$\frac{n+1}{2}$ sends this point to a vertex.

It follows that if $u$ is even, then $\tau^{-1}\Sigma^u$ acts by
shifting $\frac{u}{2}(n+1) + 1$ units to the right, and if $u$ is odd,
then $\tau^{-1}\Sigma^u$ acts by shifting $\frac{u}{2}(n+1) + 1$ units
to the right and reflecting in the horizontal centre line.

So if $u$ is even, then the AR quiver of $\sC$ has the shape of a
cylinder, and if $u$ is odd, then the AR quiver of $\sC$ has the shape
of a M\"{o}bius band.

\subsection{$u$-cluster categories of Dynkin type $D$}
\label{subsec:clusterD}
Let $Q$ be a Dynkin quiver of type $D_n$ for an integer $n \geq 4$.
Since the orientation of the quiver does not affect the derived
category, we can assume that $Q$ has the form in Figure \ref{fig:Dn}.
\begin{figure}
\[
  \xymatrix @+1.6pc @!0 {
             &          &               &                     & (n-1)^+  \\
    1 \ar[r] & 2 \ar[r] & \cdots \ar[r] & n-2 \ar[ur] \ar[dr] &     \\
             &          &               &                     & (n-1)^- \\
           }
\]
\caption{The Dynkin quiver $D_n$}
\label{fig:Dn}
\end{figure}
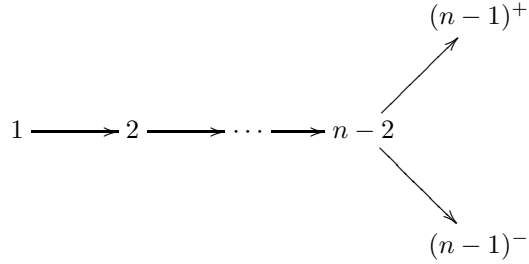
By \cite[cor.\ 4.5(i)]{Happel}, the AR quiver of $\Df(kQ)$ is the
repetitive quiver $\BZ D_n$ shown in Figure \ref{fig:ZDn}.
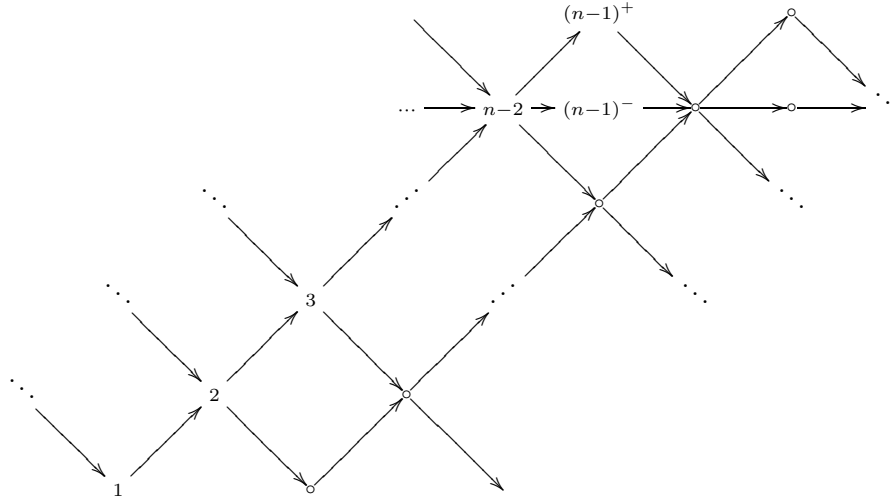
\begin{figure}
\[
  \def\objectstyle{\scriptstyle}
  \vcenter{
  \xymatrix @+1pc @!0 {
& & & & {} \ar[dr] & & (n-1)^+ \ar[dr] & & *{\circ} \ar[dr] & \\
& & & & \cdots \ar[r] & n-2 \ar[dr] \ar[ur] \ar[r] & (n-1)^- \ar[r] & *{\circ} \ar[dr] \ar[ur] \ar[r] & *{\circ} \ar[r] & \ddots \\
& & \ddots \ar[dr] & & \adots \ar[ur] & & *{\circ} \ar[dr] \ar[ur] & & \ddots & \\
& \ddots \ar[dr] & & 3 \ar[dr] \ar[ur] & & \adots \ar[ur] & & \ddots & & \\
\ddots \ar[dr] & & 2 \ar[dr] \ar[ur] & & *{\circ} \ar[dr] \ar[ur] & & & & & \\
& 1 \ar[ur] & & *{\circ} \ar[ur] & & {} & & & & \\
            }
          }
\]
\caption{The repetitive quiver $\BZ D_n$}
\label{fig:ZDn}
\end{figure}
The AR quiver of the $u$-cluster category $\sC$ is $\BZ D_n$ modulo
the action of $\tau^{-1}\Sigma^u$ by \cite[prop.\ 1.3]{BMRRT}.

Again $\tau^{-1}$ acts by shifting one unit to the right.

If $n$ is even, then the suspension $\Sigma$ acts by shifting $n-1$
units to the right, and if $n$ is odd, then $\Sigma$ acts by shifting
$n-1$ units to the right and switching each pair of `exceptional'
vertices such as $(n-1)^+$ and $(n-1)^-$; cf.\ \cite[table p.\ 
359]{MiyachiYekutieli}.

It follows that if $n$ or $u$ is even, then $\tau^{-1}\Sigma^u$ acts
by shifting $u(n-1) + 1$ units to the right, and if $n$ and $u$ are
both odd, then $\tau^{-1}\Sigma^u$ acts by shifting $u(n-1) + 1$ units
to the right and switching each pair of exceptional vertices.

Accordingly, the AR quiver of the $u$-cluster category
$\sC$ has the shape of a cylinder of circumference
$u(n-1) + 1.$

\subsection{$u$-cluster categories of Dynkin type $E$}
\label{subsec:clusterE}
Let $Q$ be a Dynkin quiver of type $E_n$ for $n = 6, 7, 8$.  We can
suppose that $Q$ has the orientation in Figure \ref{fig:En}, with the
convention that for $n = 6$ the two non-filled vertices and for $n
= 7$ the leftmost non-filled vertex (and all arrows incident to them)
do not exist.
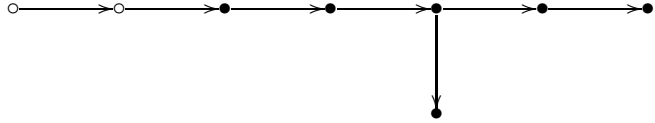
\begin{figure}
\[
  \vcenter{
  \xymatrix @+1.3pc @!0 {
    *{\circ} \ar[r] & *{\circ} \ar[r] & *{\bullet} \ar[r] & *{\bullet} \ar[r] & *{\bullet} \ar[r] \ar[d]& *{\bullet} \ar[r] & *{\bullet} \\
    & & & & *{\bullet} \\
                      }
          }
\]
\caption{The Dynkin quivers $E_6$, $E_7$, $E_8$}
\label{fig:En}
\end{figure}
By \cite[cor.\ 4.5(i)]{Happel}, the AR quiver of $\Df(kQ)$ is the
repetitive quiver $\BZ E_n$ shown in Figure \ref{fig:ZEn}.
\begin{figure}
\[
  \vcenter{
  \xymatrix @+0.65pc @!0 {
& & & & & {} \ar[dr] & & *{\bullet} \ar[dr] & & *{\bullet} \\
& & & & \ddots \ar[dr] & & *{\bullet} \ar[dr] \ar[ur] & & *{\bullet} \ar[dr] \ar[ur] & \\
& & & \ddots \ar[dr] & \cdots \ar[r] & *{\bullet} \ar[dr] \ar[ur] \ar[r] & *{\bullet} 
  \ar[r] & *{\bullet} \ar[dr] \ar[ur] \ar[r] & *{\bullet} \ar[r] & \ddots \\
& & \ddots \ar[dr] & & *{\bullet} \ar[ur]\ar[dr] & & *{\bullet} \ar[dr] \ar[ur] & & \ddots & \\
& \ddots \ar[dr] & & *{\bullet} \ar[dr] \ar[ur] & & *{\bullet} \ar[dr] \ar[ur] & & 
 \ddots & & \\
\ddots \ar[dr] & & *{\circ} \ar[dr] \ar[ur] & & *{\circ} \ar[dr] \ar[ur] & & 
  \ddots & & & \\
& *{\circ} \ar[ur] & & *{\circ} \ar[ur] & & *{\circ} & & & & \\
            }
          }
\]
\caption{The repetitive quivers $\BZ E_6$, $\BZ E_7$, $\BZ E_8$}
\label{fig:ZEn}
\end{figure}
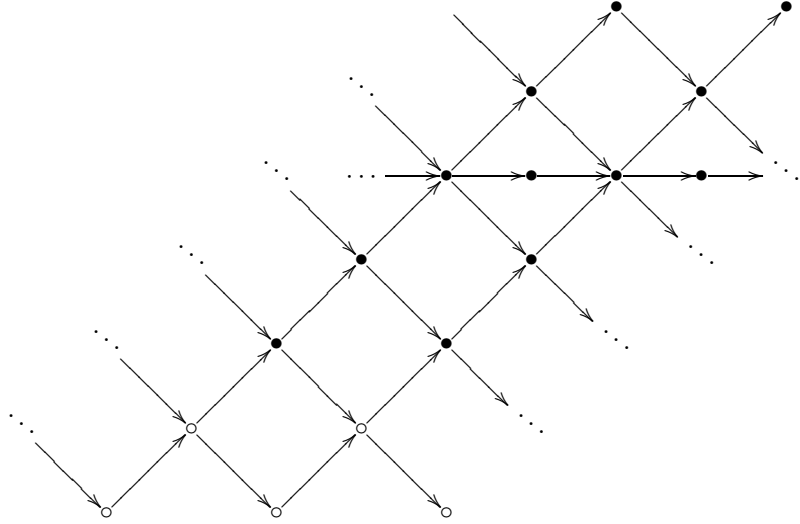
Again, for $n = 6$ and $n = 7$ the bottom two rows and bottom row,
respectively, of non-filled vertices do not occur. Note that for $n =
6$ the AR quiver has a symmetry at the central line which does not
exist for $n = 7, 8$.

The AR quiver of the $u$-cluster category $\sC$ is $\BZ E_n$ modulo
the action of $\tau^{-1}\Sigma^u$ by \cite[prop.\ 1.3]{BMRRT}.

Again $\tau^{-1}$ acts by shifting one unit to the right.

If $n = 6$ then the suspension $\Sigma$ acts by shifting 6 units to
the right and reflecting in the central line of the AR quiver.  If $n
= 7, 8$ then $\Sigma$ acts by shifting $9$, respectively $15$ units to
the right.  See \cite[table 1, p.\ 359]{MiyachiYekutieli}.

It follows that the action of $\tau^{-1}\Sigma^u$ is given as follows:
for $n = 6$ and $u$ even, by shifting $6u+1$ units to the right; for
$n = 6$ and $u$ odd, by shifting $6u+1$ units to the right and
reflecting in the central line; for $n = 7$, by shifting $9u+1$ units
to the right; for $n = 8$, by shifting $15u+1$ units to the right.

In particular, the AR quiver of the $u$-cluster category of type
$E_n$, $n = 6,7,8$, has the shape of a cylinder, except when $n = 6$ and
$u$ is odd where it has the shape of a M\"obius band.

\section{Cluster tilting objects}
\label{sec:orthogonal}

The notion of a $u$-cluster tilting object in a triangulated category
was recalled in Section \ref{sec:cluster}.  There is also a
definition in abelian categories, cf.\ \cite[sec.\ 2]{Iyama}.  An
object $X$ of an abelian category is called $u$-cluster tilting if
\begin{enumerate}

  \item  $\Ext^1(X,t) = \cdots = \Ext^u(X,t) = 0 \; \Leftrightarrow \;
         t \in \add\, X$,

\medskip

  \item  $\Ext^1(t,X) = \cdots = \Ext^u(t,X) = 0 \; \Leftrightarrow \;
         t \in \add\, X$.

\end{enumerate}

Over selfinjective algebras, there is the following simple connection
between $u$-cluster tilting objects in the module category (which
is abelian) and the stable module category (which is triangulated).

\begin{Proposition}
\label{prop:max-orth}
Let $A$ be a selfinjective $k$-algebra and let $X$ be a 
$u$-cluster tilting object of the module category $\mod\,A$.  Then $X$ is
also a $u$-cluster tilting object of the stable module category
$\stab\,A$.
\end{Proposition}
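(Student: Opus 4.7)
The plan is to reduce the two defining conditions for $X$ to be a $u$-cluster tilting object in the triangulated category $\stab\,A$ directly to the corresponding Ext-vanishing conditions in the abelian category $\mod\,A$, which hold by hypothesis. Two standard facts about the selfinjective algebra $A$ make this reduction work. Firstly, the suspension of $\stab\,A$ is the inverse syzygy $\Omega^{-1}$, so that for $M, N \in \mod\,A$ and any $i \geq 1$ one has a natural isomorphism
\[
  \Hom(M, \Sigma^{i} N) \;\cong\; \Ext^{i}_{A}(M, N),
\]
where the left-hand side is the morphism space in $\stab\,A$. Secondly, since projective $A$-modules are injective, $\Ext^{i}_{A}(-, P) = 0 = \Ext^{i}_{A}(P, -)$ for any projective $P$ and any $i \geq 1$; in particular, these Ext groups are unaffected when projective direct summands are added to either argument.

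To verify condition (i) for $X$ in $\stab\,A$, one fixes an object $T$ of $\stab\,A$ and lifts it to a module, still written $T$. The vanishing of $\Hom(X, \Sigma^{i} T)$ in $\stab\,A$ for $i = 1, \dots, u$ is then equivalent, by the first fact, to $\Ext^{i}_{A}(X, T) = 0$ for those $i$. By the hypothesis that $X$ is $u$-cluster tilting in $\mod\,A$, this is equivalent in turn to $T$ being a direct summand of some $X^{n}$ in $\mod\,A$, which forces $T \in \add\,X$ in $\stab\,A$. For the converse, suppose $T \in \add\,X$ in $\stab\,A$, and pick $Y$ and $n$ with $T \oplus Y \cong X^{n}$ in $\stab\,A$; this lifts to an isomorphism $T \oplus Y \oplus P \cong X^{n} \oplus Q$ in $\mod\,A$ with $P, Q$ projective. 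Applying $\Ext^{i}_{A}(X, -)$ and using that $\Ext^{i}_{A}(X, Q) = 0$ together with $\Ext^{i}_{A}(X, X^{n}) = 0$ for $i = 1, \dots, u$ (the latter since $X$ is $u$-cluster tilting in $\mod\,A$), one concludes $\Ext^{i}_{A}(X, T) = 0$ for the same range, and translating back yields the required vanishing of $\Hom(X, \Sigma^{i} T)$ in $\stab\,A$. Condition (ii) follows by the entirely symmetric argument with the two arguments of $\Hom$ and $\Ext$ swapped.

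The argument is essentially formal: both directions rest on the two facts above. The only point requiring care is to distinguish $\add\,X$ taken in $\mod\,A$ from $\add\,X$ taken in $\stab\,A$, and to invoke the selfinjectivity of $A$ at the precise moment when projective summands need to be invisible to $\Ext^{i}_{A}$ for $i \geq 1$.
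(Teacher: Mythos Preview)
Your proof is correct and follows essentially the same approach as the paper's: both rely on the isomorphism $\underline{\Hom}(M,\Sigma^i N)\cong\Ext^i_A(M,N)$ for $i\ge 1$ to translate the triangulated $u$-cluster tilting conditions into the abelian ones. The paper is terser on the converse direction of~(i), simply observing that $\underline{\Hom}(X,\Sigma^i X)=0$ and leaving to additivity the passage to $t\in\add\,X$, whereas you spell out the lift $T\oplus Y\oplus P\cong X^n\oplus Q$ explicitly; this extra care is not strictly necessary (since $\underline{\Hom}$ and $\Sigma$ are additive in $\stab\,A$), but it is not wrong either.
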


\begin{proof}
Since $A$ is selfinjective, the suspension functor $\Sigma$ provides
us with isomorphisms
\[
  \underline{\Hom}(M,\Sigma^i N) \cong \Ext^i(M,N)
\]
for $M$ and $N$ in $\mod\,A$ and $i \geq 1$.  Here
$\underline{\Hom}$ denotes morphisms in $\stab\,A$.

On one hand, this implies
\[
  \underline{\Hom}(X,\Sigma^1 X) = \cdots
  = \underline{\Hom}(X,\Sigma^u X) = 0.
\]

On the other hand, suppose that $t$ in $\stab\,A$ satisfies
\[
  \underline{\Hom}(X,\Sigma^1 t) = \cdots
  = \underline{\Hom}(X,\Sigma^u t) = 0.
\]
Then
\[
  \Ext^1(X,t) = \cdots = \Ext^u(X,t) = 0,
\]
so $t$ is in $\add\, X$ viewed in $\mod\,A$.  But then $t$ is
clearly also in $\add\, X$ viewed in $\stab\,A$.

A similar argument shows that
\[
  \underline{\Hom}(t,\Sigma^1 X) = \cdots
  = \underline{\Hom}(t,\Sigma^u X) = 0
\]
implies that $t$ is in $\add\, X$ viewed in $\stab\,A$.
\end{proof}

\section{Dynkin type $A$}
\label{sec:typeA}

\subsection{Nakayama algebras}
\label{sec:Nakayama}

This subsection proves part (i) of Theorem A from the introduction. 

For integers $N, n \geq 1$, consider the Nakayama algebra $B_{N,n+1}$
defined as the path algebra of the circular quiver with $N$ vertices
and all arrows pointing in the same direction, modulo the ideal
generated by paths of length $n+1$.

This is a selfinjective algebra of tree class $A_n$.  The stable AR
quiver of $B_{N,n+1}$ has the shape of a cylinder and can be obtained
as $\BZ A_n$ modulo a shift by $N$ units to the right.

On the other hand, as we saw in Section \ref{sec:cluster}, if $u$ is
even then the $u$-cluster category of type $A_n$ has an AR quiver
which can be obtained as $\BZ A_n$ modulo a shift by $\frac{u}{2}(n+1)
+ 1$ units to the right.  Indeed, this is no coincidence.

\begin{Theorem}
\label{thm:Nakayama}
Let $u \geq 2$ be an even integer and let $n \geq 1$ be an integer.
Set
\[
  N = \frac{u}{2}(n+1) + 1.
\]
Then the $u$-cluster category of type $A_n$ is equivalent as a
triangulated category to the stable module category $\stab\,
B_{N,n+1}$.
\end{Theorem}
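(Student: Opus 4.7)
The plan is to apply the Keller--Reiten ``Morita'' theorem \cite{KellerReiten2}, which reduces the proof to verifying four properties of $\stab\,B_{N,n+1}$: (a) it is a Hom-finite algebraic triangulated category, (b) its Calabi--Yau dimension equals $u+1$, (c) it admits a $u$-cluster tilting object $T$ whose endomorphism algebra is $kA_n$, and (d) $\underline{\Hom}(T,\Sigma^{-i}T)=0$ for $i=1,\dots,u-1$. Property (a) is automatic since $B_{N,n+1}$ is a finite-dimensional selfinjective $k$-algebra and the stable category of such an algebra is algebraic and Hom-finite.

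For property (b), I would invoke Dugas's computation of Calabi--Yau dimensions of stable module categories \cite{Dugas} for the Nakayama algebra $B_{N,n+1}$. Geometrically, the stable AR quiver of $B_{N,n+1}$ is $\BZ A_n$ modulo the shift $\tau^N$ by $N$ units to the right, and by the discussion in Section~\ref{subsec:clustera} the action of $\tau^{-1}\Sigma^u$ on $\BZ A_n$ coincides with the shift by $\frac{u}{2}(n+1)+1 = N$ units when $u$ is even. Thus $\tau^N \simeq \tau^{-1}\Sigma^u$ on the AR quiver, which matches the required identity $\Sigma^{u+1}\simeq \tau\Sigma$ (Serre duality) that characterises CY dimension $u+1$; Dugas's formula for $B_{N,n+1}$ confirms this rigorously.

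For (c) and (d), I would construct $T$ as an $A_n$-slice in the stable AR quiver. Concretely, choose $n$ indecomposable non-projective $B_{N,n+1}$-modules of Loewy lengths $1,2,\dots,n$ supported at a common starting vertex; they form a section of the stable AR quiver isomorphic to $A_n$ as a valued quiver. Standard computation of Hom-spaces for Nakayama algebras then shows $\End_{\stab}(T)\cong kA_n$. To verify (d), I would use the Auslander--Reiten formulas together with the description of $\Sigma$ in Section~\ref{subsec:clustera} as reflection combined with a shift by $\frac{n+1}{2}$ units: each $\Sigma^{-i}T$ occupies an $A_n$-slice positioned $\frac{i}{2}(n+1)$ units away (with a reflection when $i$ is odd), which for $1\le i\le u-1$ lies entirely in the ``forward support'' region where no nonzero maps from $T$ exist. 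Finally, for the $u$-cluster tilting condition I would use Proposition~\ref{prop:max-orth} to reduce to checking the condition in $\mod\,B_{N,n+1}$, where the analogous $\Ext^i$ vanishing is again a direct slice/AR-quiver computation, and the converse (that $t$ lying in the Ext-vanishing locus forces $t\in\add\,T$) follows from a counting argument: the $n$ orbit representatives of $T$ under $\Sigma^{-1},\dots,\Sigma^{-u}$ together with their reflections already exhaust the indecomposables of $\stab\,B_{N,n+1}$.

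The main obstacle will be Step (d) and the $u$-cluster tilting property: one must carefully track how the suspension $\Sigma$ acts on the cylindrical stable AR quiver of $B_{N,n+1}$ of circumference $N$, and show that the positions of $\Sigma^{-1}T,\dots,\Sigma^{-(u-1)}T$ avoid the ``Hom support'' of $T$. Since $N=\frac{u}{2}(n+1)+1$ is designed so that exactly $u$ shifts of a slice tile the cylinder with a one-unit overlap, the verification is delicate but essentially combinatorial: it amounts to checking that consecutive slices $\Sigma^{-i}T$ are separated by strictly more than $n$ units modulo $N$, except at the extremal cases $i=0$ and $i=u$ where they coincide.
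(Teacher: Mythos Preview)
Your overall strategy coincides with the paper's: apply Keller--Reiten's Morita theorem, appeal to Dugas for the Calabi--Yau dimension, take an $A_n$-slice in the stable AR quiver as the candidate $u$-cluster tilting object, and use Proposition~\ref{prop:max-orth} to transfer the tilting condition from $\mod$ to $\stab$. Two points deserve attention.

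First, a small omission: the paper treats $n=1$ separately, observing directly that both categories have AR quiver a disjoint union of $u+1$ points with $\Sigma$ acting by a cyclic shift, and only invokes Keller--Reiten for the connected case $n\ge 2$. You should isolate this case too.

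Second, and more substantively, your ``counting argument'' for the converse direction of the $u$-cluster tilting property does not work as written. The slices $T,\Sigma^{-1}T,\dots,\Sigma^{-u}T$ contribute $n(u+1)$ indecomposables, whereas $\stab\,B_{N,n+1}$ has $Nn=n\bigl(\tfrac{u}{2}(n+1)+1\bigr)$ indecomposables, and these differ for $n\ge 2$; so the slices themselves do not exhaust the category. What one must show instead is that the \emph{Hom-supports} of the shifted slices cover the complement of $T$. The paper does this by invoking Iyama's combinatorial description of $u$-cluster tilting subsets of $\BZ A_n/\tau^N$ via ``forbidden regions'' $H^+(x)$ \cite[sec.~4]{Iyama}: each region $H(i)=\bigcup_{x\in S}H^+(\tau^{-1}\omega^{-i+1}x)$ is a triangle with edges of $n$ vertices, and the $u$ triangles $H(1),\dots,H(u)$ tile precisely the complement of the slice in the cylinder of circumference $N=\tfrac{u}{2}(n+1)+1$. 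The same framework also handles (d) cleanly: $\Sigma^{-1}$ acts on the stable AR quiver as Iyama's automorphism $\omega$, and one checks that $\omega^{u-1}$ shifts by $N-(n+2)$ units, so the $\omega^i(x_j)$ for $1\le i\le u-1$ never wrap around the cylinder far enough to re-enter the forbidden region $H(X)$ from the right. Your heuristic about slices ``separated by strictly more than $n$ units'' gestures at this, but the precise shape of the Hom-support (a triangle, not an interval) and Iyama's theorem are what make the argument go through.
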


\begin{proof}
For $n=1$ the theorem states that the $u$-cluster category of type
$A_1$ is triangulated equivalent to $\stab\,B_{u+1,2}$. This is true
by the observation that both categories have AR quiver a disconnected
union of $u+1$ vertices, with suspension equal to a cyclic shift by
one vertex.

We now assume $n\ge 2$, in which case the relevant categories are
connected.  By Keller and Reiten's Morita theorem for $u$-cluster
categories \cite[thm.\ 4.2]{KellerReiten2}, we need to show three
things for the stable module category $\stab\, B_{N,n+1}$.
\begin{itemize}

  \item  It has {\em Calabi-Yau dimension } $u + 1$. 

\medskip

  \item  It has {\em a $u$-cluster tilting object } $X$ with
    endomorphism ring $kA_n$.

\medskip

  \item  The object $X$ has {\em vanishing of negative self-extensions
    } in the sense that
$\underline{\Hom}(X,\Sigma^{-i}X) = 0$ for $i=1,
\ldots, u-1$.
\end{itemize}
According to this, the proof is divided into three sections.  Note
the shift in the indices compared to \cite{KellerReiten2}: their
$d$-cluster categories are $u$-cluster categories for $u=d-1$ in our
notation.

\medskip
{\em Calabi-Yau dimension. }
We must show that $\stab\, B_{N,n+1}$ has Calabi-Yau dimension $u +
1$, and we can do so using the results by Dugas in \cite{Dugas}.  To
apply his result from \cite[thm.\ 6.1(2)]{Dugas} in our case of type
$A_n$ where $n\ge 2$, we need the Coxeter number $h_{A_n}=n+1$, and we
have to observe that in Asashiba's notation from
\cite[appendix]{Asashiba} the Nakayama algebra $B_{N,n+1}$ has the
form $(A_n,\frac{N}{n},1)$ where $f=\frac{N}{n}$ is the {\em
  frequency}. Then \cite[thm.\ 6.1(2)]{Dugas} states that the stable
module category $\stab\, B_{N,n+1}$ has Calabi-Yau dimension $2r+1$
where $r \equiv -(h_{A_n})^{-1} \,\modulo\, fn$ and $0\le r<fn$.  Since
$f=\frac{N}{n}$ the value of $r$ is determined by $0\le r<N$ and $r
\equiv -(h_{A_n})^{-1} \,\modulo\, N = -(n+1)^{-1} \,\modulo\, N$.
By our assumptions in Theorem \ref{thm:Nakayama} we have that $u = 2\ell$
is even and that $N = \frac{u}{2}(n+1)+1 = \ell(n+1)+1$. Then the condition
for the value of $r$ reads $r \equiv
-(n+1)^{-1} \,\modulo\, (\ell(n+1)+1)$ which together with $0\le r<
N = \ell(n+1)+1$ clearly forces $r = \ell$.  Therefore we can deduce that
$\stab\, B_{N,n+1}$ has Calabi-Yau dimension $2r+1 = 2\ell+1 = u+1,$ as
desired.

\medskip
{\em $u$-cluster tilting object. }
To find a $u$-cluster tilting object $X$ in $\stab\,B_{N,n+1}$, by
Proposition \ref{prop:max-orth} it suffices to find a $u$-cluster
tilting module $X$ in the module category $\mod\,B_{N,n+1}$.  We
define $X$ to be the direct sum of the projective indecomposable
$B_{N,n+1}$-modules and the indecomposable modules $x_1, \ldots, x_n$
whose position in the stable AR quiver of $B_{N,n+1}$ is given by
Figure \ref{fig:Anx}.
\begin{figure}
\[
  \xymatrix @+1pc @!0 {
& & & & {} \ar[dr] & & x_n \ar[dr] & & *{\circ} \ar[dr] & \\
& & & & & x_{n-1} \ar[dr] \ar[ur] & & *{\circ} \ar[dr] \ar[ur] & & \ddots \\
& & \ddots \ar[dr] & & \adots \ar[ur] & & *{\circ} \ar[dr] \ar[ur] & & \ddots & \\
& \ddots \ar[dr] & & x_3 \ar[dr] \ar[ur] & & \adots \ar[ur] & & \ddots & & \\
\ddots \ar[dr] & & x_2 \ar[dr] \ar[ur] & & *{\circ} \ar[dr] \ar[ur] & & & & & \\
& x_1 \ar[ur] & & *{\circ} \ar[ur] & & {} & & & & \\
            }
\]
\caption{The indecomposable modules $x_1, \ldots, x_n$ for the
Nakayama algebra} 
\label{fig:Anx}
\end{figure}
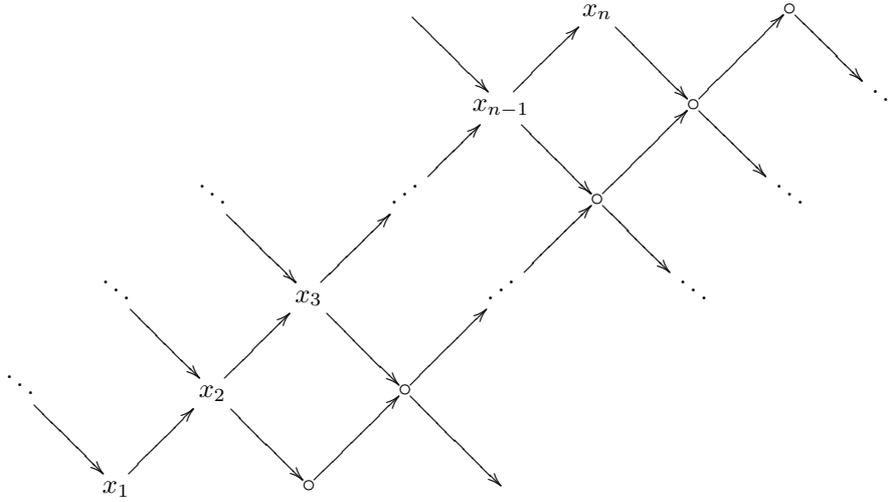
For the (uniserial) Nakayama algebras $B_{N,n+1}$ it is well-known
that the $i$th layer from the bottom of the stable AR quiver contains
precisely the non-projective indecomposable modules of dimension $i$
(see e.g. \cite[cor. V.4.2]{ASS}).  Moreover, the arrow from $x_i$ to
$x_{i+1}$ in the above picture is a monomorphism for each $i$.  From
this follows easily that the stable endomorphism ring of the module
$X$ is isomorphic to $kA_n$.

We now show that the module $X$ defined above is $u$-cluster tilting.
The $u$-cluster tilting modules (also called maximal $u$-orthogonal
modules) for selfinjective algebras of finite type with tree class
$A_n$ were described combinatorially in \cite[sec.\ 4]{Iyama}.  We
briefly sketch the main ingredients and refer to \cite{Iyama} for
details. On the stable AR quiver of $B_{N,n+1}$ one introduces a
coordinate system as in Figure \ref{fig:An_coordinates}.
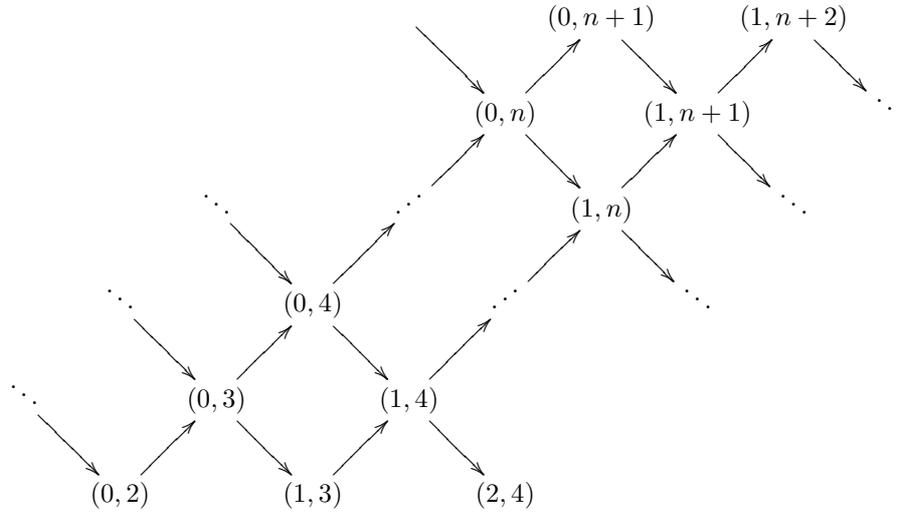
\begin{figure}
\[
  \vcenter{
  \xymatrix @+1pc @!0 {
& & & & {} \ar[dr] & & (0,n+1) \ar[dr] & & (1,n+2) \ar[dr] & \\
& & & & & (0,n) \ar[dr] \ar[ur] & & (1,n+1)\ar[dr] \ar[ur] & & \ddots \\
& & \ddots \ar[dr] & & \adots \ar[ur] & & (1,n) \ar[dr] \ar[ur] & & \ddots & \\
& \ddots \ar[dr] & & (0,4) \ar[dr] \ar[ur] & & \adots \ar[ur] & & \ddots & & \\
\ddots \ar[dr] & & (0,3) \ar[dr] \ar[ur] & & (1,4) \ar[dr] \ar[ur] & & & & & \\
& (0,2) \ar[ur] & & (1,3) \ar[ur] & & (2,4) & & & & \\
            }
          }
\]
\caption{The coordinate system for the Nakayama algebra}
\label{fig:An_coordinates}
\end{figure}
The first coordinate has to be taken modulo $N$.  To each vertex $x$
in the stable AR quiver one associates a `forbidden region' $H^{+}(x)$ which
is just the rectangle spanned from $x$ to the right; more precisely,
if $x=(i,j)$, then $H^{+}(x)$ is the rectangle with corners $x=(i,j)$,
$(i,i+n+1)$, $(j-2,i+n+1)$ and $(j-2,j)$ shown in Figure
\ref{fig:An_region}. 
\begin{figure}
\[
  \xymatrix @!0 @+2pc {
    \cdots \ar@{-}[rrrrr] & & & *{} \ar@{-}[dr] & & \cdots \\
    & & & H^+(x) & *{} & \\
    & x \ar@{-}[uurr] \ar@{-}[dr] & & & & \\
    \cdots \ar@{-}[rrrrr] & & *{} \ar@{-}[uurr] & & & \cdots \\
                      }
\]
\caption{The set $H^+(x)$ in Dynkin type $A$}
\label{fig:An_region}
\end{figure}
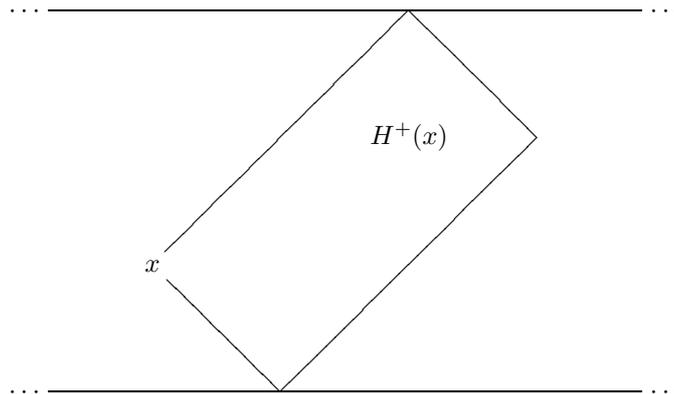
Define an automorphism $\omega$ on the stable AR quiver by setting
$\omega(i,j) = (j-n-2,i+1)$ and let $\tau$ be the usual AR
translation, $\tau(i,j)=(i-1,j-1)$.  Then a subset $S$ of the vertex
set $M$ in the stable AR quiver is called $u$-cluster tilting if
\[
  M\setminus S = \bigcup_{x\in S,\: 0<i\le u} H^+(\tau^{-1}\omega^{-i+1}x).
\]
For our particular choice of the $B_{N,n+1}$-module $X$ the set $S$ is
given by the above `slice' $x_1, \ldots, x_n$.  Then the
straightforward, but crucial, observation is that for $i=1,\ldots,u$
the sets $H(i) = \bigcup_{x\in S} H^+(\tau^{-1}\omega^{-i+1}x)$ are as
shown in Figure \ref{fig:An_regions}.
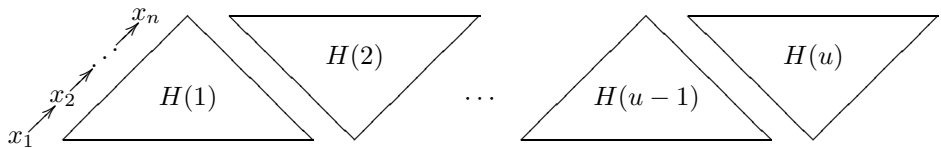
\begin{figure}
\[
  \xymatrix @-0.7pc @!0 {
    & & & *{x_n} & *{} \ar@{-}[dddrrr] & *{} \ar@{-}[dddrrr] \ar@{-}[rrrrrr] & & & & & &*{} & & & & *{} \ar@{-}[dddrrr] & *{} \ar@{-}[dddrrr] \ar@{-}[rrrrrr] & & & & & &*{}&*{}\\
    & & *{\adots} \ar[ur]& & & & & & H(2) & & & & & & & & & & & H(u) & & & *{} &\\
    & *{x_2} \ar[ur]& & & H(1) & & & & & & & \cdots & & & & H(u-1) & & & & & &*{} & &\\
   *{x_1} \ar[ur]& *{} \ar@{-}[uuurrr] \ar@{-}[rrrrrr] & & & & & & *{} & *{} \ar@{-}[uuurrr]& & & & *{} \ar@{-}[uuurrr] \ar@{-}[rrrrrr] & & & & & & *{} & *{} \ar@{-}[uuurrr]& *{}& &&\\
                        }
\]
\caption{The sets $H(i)$ for the Nakayama algebra}
\label{fig:An_regions}
\end{figure}
I.e., each $H(i)$ contains all the vertices in a triangular region
of the stable AR quiver with each edge of the triangle containing
$n$ vertices. 

Recall that $u$ is even by assumption.  In total, the union of the
forbidden regions $\bigcup_{0<i\le u}H(i)$ covers precisely the region
of the stable AR quiver between the slice $x_1, \ldots, x_n$ and the
shift of it by $\frac{u}{2}(n+1)+1$ units to the right.  But the
stable AR quiver has a circumference of $N = \frac{u}{2}(n+1)+1$
units, so it is clear from the above discussion that the set $S$
is $u$-cluster tilting and that, accordingly, the $B_{N,n+1}$-module
$X$ is indeed $u$-cluster tilting.

\medskip
{\em Vanishing of negative self-extensions. }
We must show $\underline{\Hom}(X,\Sigma^{-i} X) = 0$ for $i=1, \ldots,
u-1$.  For this, we can view $X$ in $\stab\, B_{N,n+1}$ where it has
the $n$ indecomposable summands $x_1, \ldots, x_n$.  Given
non-projective indecomposable $B_{N,n+1}$-modules $v$ and $w$, observe
that by \cite[sec.\ 4.2 and prop.\ 4.4.3]{Iyama}, we have
$\underline{\Hom}(v,w) = 0$ precisely if the vertex of $w$ is outside
the forbidden region $H^+(v)$.  So we need to check that all vertices
corresponding to indecomposable summands of $\Sigma^{-i} X$ for $i =
1, \ldots, u-1$ are outside the forbidden region $H(X) = \bigcup_j
H^+(x_j)$, where the union is over the indecomposable summands $x_j$
in $X$.

Now, the action of $\Sigma^{-1}$ on the stable AR quiver is just
$\omega$.  For instance, Figure \ref{fig:AnH} shows the forbidden
region along with the direct summands of $X$ and of $\omega X$.
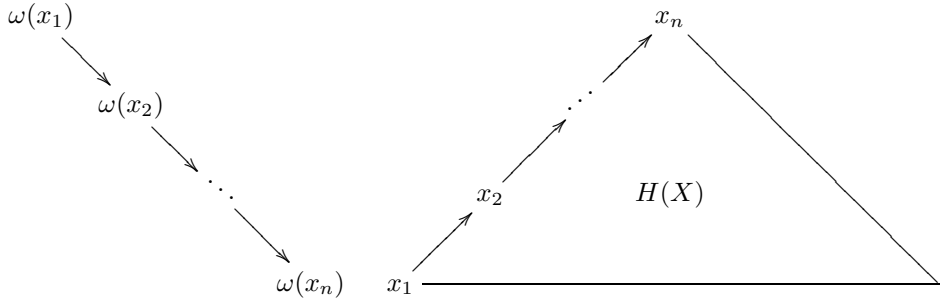
\begin{figure}
\[
  \vcenter{
  \xymatrix @+0.8pc @!0 {
    \omega(x_1) \ar[dr] & & & & & & & x_n \ar@{-}[dddrrr] \\
    & \omega(x_2) \ar[dr] & & & & & \adots \ar[ur] & & \\
    & & \ddots \ar[dr] & & & x_2 \ar[ur] & & H(X) \\
    & & & \omega(x_n) & x_1 \ar[ur] \ar@{-}[rrrrrr] & & & &&&\\
                      }
          }
\]
\caption{The set $H(X)$ and direct summands of $X$ and $\omega X$ for
the Nakayama algebra}
\label{fig:AnH}
\end{figure}
It is clear that the $\omega(x_j)$ fall outside $H(X)$.  More
generally, $\omega$ moves vertices to the left, so the only way we
could fail to get $\underline{\Hom}(X,\Sigma^{-i} X) = 0$ would be if
we took $i$ so large that the $\omega^i(x_j)$ made it all the way
around the stable AR quiver and reached the forbidden region from the right.
Let us check that this does not happen: $\omega^2$ is just a shift by
$n+1$ units to the left, and hence $\omega^{u-2} =
(\omega^2)^{\frac{u}{2} - 1}$ is a shift by
$(\frac{u}{2} - 1)(n+1) = N - (n + 2)$
units to the left.  Since the stable AR quiver has circumference $N$
it is clear that by applying $\omega^{u-1}$
we do not reach the forbidden region from the right.
\end{proof}

\subsection{M\"{o}bius algebras}
\label{sec:Mobius}

This subsection proves part (ii) of Theorem A from the introduction.

For integers $p,s \geq 1$, consider the M\"{o}bius algebra $M_{p,s}$.
Following the notation in \cite[app. A2.1.2]{Asashiba}, this is the
path algebra of the quiver shown in Figure \ref{fig:Moebius} modulo
the following relations.
\begin{itemize}

  \item[{(i)}] $\alpha_p^{i}\cdots\alpha_0^{i} = 
\beta_p^{i}\cdots\beta_0^{i}$ for each $i\in\{0,\ldots,s-1\}$.

\medskip

  \item[{(ii)}] $\beta_0^{i+1}\alpha_p^{i}=0$, 
$\alpha_0^{i+1}\beta_p^{i}=0$ for each $i\in\{0,\ldots,s-2\}$,\\[2mm]
$\alpha_0^{0}\alpha_p^{s-1}=0$,  
$\beta_0^{0}\beta_p^{s-1}=0$.

\medskip

  \item[{(iii)}] Paths of length $p+2$ are equal to zero.  

\end{itemize}

\begin{figure}
\[
  \xymatrix @+1.25pc {
    & & *{\circ} \ar[dl]_{\beta_p^{s-1}} & \cdots \ar[l]_{\beta_{p-1}^{s-1}}& *{}\ar@{.}[ddrr]& *{}& \\
    & *{\circ} \ar[dl]_{\beta_0^{0}} \ar[d]^>>>>>>>{\alpha_0^{0}} & *{\circ} \ar[l]^<<<<<{\alpha_p^{s-1}}& \cdots \ar[l]^{\alpha_{p-1}^{s-1}}& *{} \ar@{.}[dr]& &*{} \\
    *{\circ} \ar[d]_{\beta_1^{0}} & *{\circ} \ar[d]^{\alpha_1^{0}}& & & & *{} & *{}\\
    \vdots \ar[d]_{\beta_{p-1}^{0}} & \vdots \ar[d]^{\alpha_{p-1}^{0}}& & & & \vdots & \vdots \\
    *{\circ} \ar[dr]_{\beta_p^{0}} & *{\circ} \ar[d]^<<<<<<{\alpha_p^{0}}& & & & *{\circ} \ar[u]^{\alpha_1^{2}} & *{\circ} \ar[u]_{\beta_1^{2}}\\
    & *{\circ} \ar[r]^>>>>>>{\alpha_0^{1}} \ar[dr]_{\beta_0^{1}} & *{\circ} \ar[r]^{\alpha_1^{1}} & \cdots \ar[r]^{\alpha_{p-1}^{1}} & *{\circ} \ar[r]^<<<<<<<{\alpha_p^{1}} & *{\circ} \ar[u]^>>>>>>>{\alpha_0^{2}} \ar[ur]_{\beta_0^{2}} & \\
    & & *{\circ} \ar[r]_{\beta_1^{1}} & \cdots \ar[r]_{\beta_{p-1}^{1}}& *{\circ} \ar[ur]_{\beta_p^{1}}& & \\
           }
\]
\caption{Quiver for the M\"obius algebra}
\label{fig:Moebius}
\end{figure}
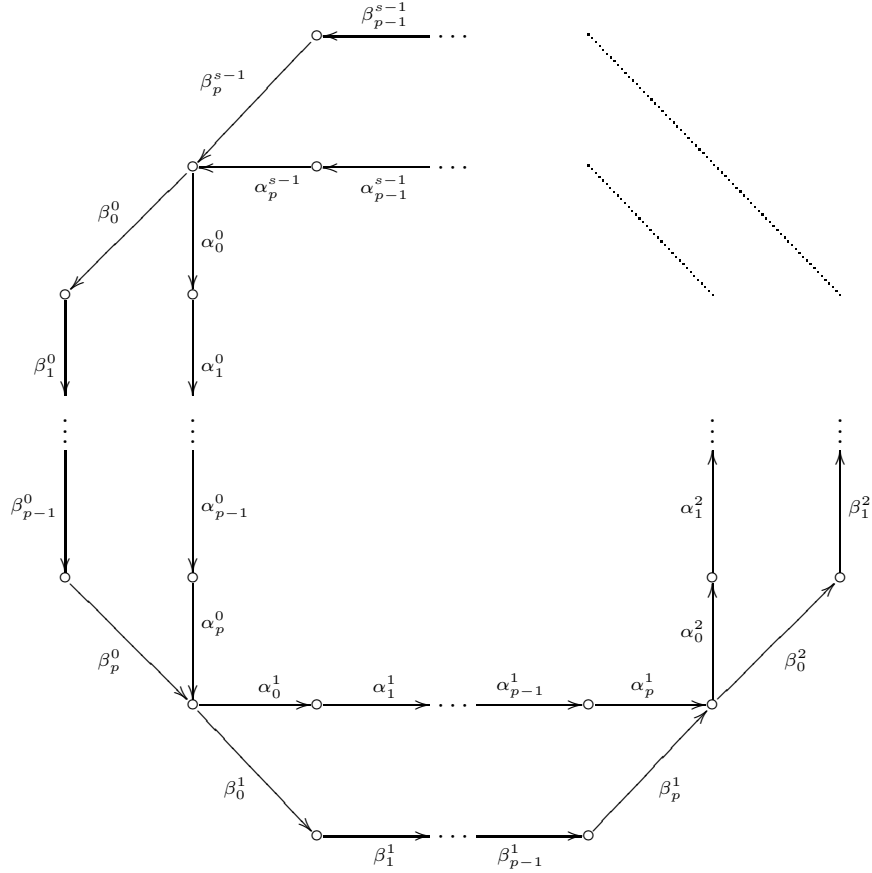

This is a selfinjective algebra of tree class $A_{2p+1}$.  In the
notation of \cite[app. A2.1.2]{Asashiba} the M\"obius algebra
$M_{p,s}$ is of the form $(A_{2p+1},s,2)$.  The stable AR quiver of
$M_{p,s}$ has the shape of a M\"{o}bius
band and can be obtained as $\BZ A_{2p+1}$ modulo a reflection in the
horizontal centre line composed with a shift by $s(2p+1)$ units to the
right, see \cite{Riedtmann}.

On the other hand, as we saw in Section \ref{sec:cluster}, if $u$ is
odd then the $u$-cluster category of type $A_{2p+1}$ has an AR quiver
which can be obtained as $\BZ A_{2p+1}$ modulo a reflection in the
horizontal centre line composed with a shift by
$\frac{u}{2}(2p+1+1) + 1 = u(p+1) + 1$
units to the right.  This quiver also has the shape of a M\"{o}bius
band, and again, this is no coincidence.

\begin{Theorem}
\label{thm:Mobius}
Let $u \geq 1$ be an odd integer and let $p,s \geq 1$ be integers for
which
\[
  s(2p+1) = u(p+1) + 1.
\]
Then the $u$-cluster category of type $A_{2p+1}$ is equivalent as a
triangulated category to the stable module category
$\stab\, M_{p,s}$.
\end{Theorem}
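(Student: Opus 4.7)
The plan is to mimic the argument for Theorem \ref{thm:Nakayama} and apply the Keller-Reiten Morita theorem \cite[thm.~4.2]{KellerReiten2}. Concretely, I would verify three properties of $\stab\,M_{p,s}$: (a) that its Calabi-Yau dimension equals $u+1$; (b) that it admits a $u$-cluster tilting object $X$ whose stable endomorphism algebra is $kA_{2p+1}$; and (c) that $\underline{\Hom}(X,\Sigma^{-i}X) = 0$ for $i = 1, \ldots, u-1$.

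For (a), I would invoke Dugas's computation \cite[thm.~6.1]{Dugas} specialised to the M\"obius algebra, which in Asashiba's notation is $(A_{2p+1},s,2)$. With tree class $A_{2p+1}$ the Coxeter number is $h_{A_{2p+1}} = 2p+2$, and the frequency is $s$. A direct substitution into Dugas's formula, combined with the hypothesis $s(2p+1) = u(p+1)+1$, should force the resulting Calabi-Yau dimension to equal $u+1$. This is a purely arithmetic check, analogous to the one carried out for the Nakayama algebra.

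For (b), by Proposition \ref{prop:max-orth} it is enough to construct a $u$-cluster tilting module $X$ in $\mod M_{p,s}$. I would take $X$ to be the direct sum of the projective indecomposables together with a slice of $2p+1$ non-projective indecomposables $x_1, \ldots, x_{2p+1}$ of the stable AR quiver, entirely analogous to the slice in Figure \ref{fig:Anx}. The stable endomorphism algebra of such a slice is $kA_{2p+1}$ by the same considerations as in the Nakayama case. The $u$-cluster tilting property is then checked via Iyama's combinatorial criterion \cite[sec.~4]{Iyama}: one must show that the complement of $S = \{x_1, \ldots, x_{2p+1}\}$ in the stable AR quiver is precisely $\bigcup_{i=1}^{u} H(i)$, where $H(i) = \bigcup_{x\in S} H^+(\tau^{-1}\omega^{-i+1}x)$. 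Each $H(i)$ is a triangular block sweeping $p+1$ units to the right, so after $u$ iterations the total horizontal advance is $u(p+1) + 1 = s(2p+1)$, matching exactly the length of a fundamental domain of the M\"obius identification.

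For (c), I would again use \cite[sec.~4.2 and prop.~4.4.3]{Iyama} to translate vanishing of $\underline{\Hom}(v,w)$ into the geometric statement that the vertex of $w$ lies outside $H^+(v)$. Thus one must verify that the summands of $\omega^i X$ stay outside $H(X) = \bigcup_j H^+(x_j)$ for $1 \le i \le u-1$. Since $\omega^2$ is a shift by $2p+2$ units to the left, the cumulative displacement under $\omega^{u-1}$ is less than the circumference $s(2p+1)$, so wrap-around cannot bring the orbit back into $H(X)$ from the right. The main obstacle is the careful handling of the M\"obius reflection: because $u$ is odd, odd iterates of $\omega$ and odd iterates of the identifying automorphism both flip the horizontal centre line, and one must ensure that this flip never causes the orbit $\omega^i x_j$ to re-enter the forbidden region prematurely, nor artificially inflates the stable endomorphism algebra of the slice. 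Once these M\"obius-specific bookkeeping issues are dispatched, the three conditions of \cite[thm.~4.2]{KellerReiten2} are all in place and the theorem follows.
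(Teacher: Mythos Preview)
Your overall strategy matches the paper's exactly: verify the three hypotheses of the Keller--Reiten Morita theorem via the same slice construction and Iyama's combinatorial criterion. However, there are two places where you transfer the Nakayama argument too casually.

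First, for the Calabi--Yau dimension you cite \cite[thm.~6.1]{Dugas}, but that theorem covers only the algebras of torsion order one; the M\"obius algebra $(A_{2p+1},s,2)$ has torsion order two. The paper instead invokes \cite[prop.~9.6]{Dugas}, which gives the Calabi--Yau dimension of $\stab\,M_{p,s}$ as $K_{p,s}(2p+1)-1$ for a minimal positive integer $K_{p,s}$ subject to a congruence and a parity condition. The verification that $K_{p,s}=\frac{s+1}{p+1}$, and hence that the Calabi--Yau dimension is $u+1$, is a genuinely different piece of arithmetic: one must first deduce from $s(2p+1)=u(p+1)+1$ that $p+1\mid s+1$ and that $\frac{s+1}{p+1}$ is odd, and then check the parity clause in Dugas's formula. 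A direct substitution into \cite[thm.~6.1]{Dugas} will not work.

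Second, the claim that the stable endomorphism ring of the slice is $kA_{2p+1}$ ``by the same considerations as in the Nakayama case'' glosses over a real difficulty. For the Nakayama algebra every irreducible map in the slice is a monomorphism, so compositions visibly do not factor through projectives. For the M\"obius algebra the paper anchors the slice so that the middle summand $x_{p+1}$ is the simple module at a biserial vertex; the lower $p$ irreducible maps are then epimorphisms and the upper $p$ are monomorphisms, and one must argue explicitly that the composite $x_1\to x_{2p+1}$, which sends top onto socle, is nonzero in the stable category. Without fixing the position of the slice and making this check, the endomorphism-ring computation is incomplete.
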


\begin{proof}
Like the proof of Theorem \ref{thm:Nakayama}, this proof is divided
into three sections verifying the conditions in Keller and Reiten's
Morita theorem \cite[thm.\ 4.2]{KellerReiten2}.

\medskip
{\em Calabi-Yau dimension. }
We must show that $\stab\, M_{p,s}$ has Calabi-Yau dimension $u + 1$.
Again this can be done using the work of Dugas, namely \cite[prop.\
9.6]{Dugas}.  There he shows that the Calabi-Yau dimension of the
stable module category $\stab\, M_{p,s}$ is of the form
$K_{p,s}(2p+1)-1$ where
\[
  K_{p,s} = \inf \big\{\,r\,\big|\,r\ge 1,\,r(p+1)\equiv 1 \,\modulo\, s,\,
  \mbox{and $\displaystyle \frac{r(s+p+1)-1}{s}$ is even} \big\}.
\]
Let us determine the number $K_{p,s}$ for the values of $u,p$ and $s$
given by the assumptions of the theorem.  We have 
\[
  u+2 = \frac{s(2p+1)-1}{p+1} +2 = \frac{(s+1)(2p+1)}{p+1}.
\]
Since $\gcd(p+1,2p+1)=1$, we deduce that $p+1$ divides $s+1$.
Moreover, the integer $\frac{s+1}{p+1}$ is odd since $u$ is odd by
assumption.  Now, for the condition $r(p+1)\equiv \, 1 \,\modulo\, s$,
the integer $\frac{s+1}{p+1}$ is clearly the minimal (positive)
solution.  Moreover, for this value $r=\frac{s+1}{p+1}$ we have that
\[
  \frac{r(s+p+1)-1}{s}
  = \frac{(s+1)(s+p+1)-(p+1)}{(p+1)s}
  = \frac{s+p+2}{p+1} = \frac{s+1}{p+1}+1
\]
is even. Hence $K_{p,s}=\frac{s+1}{p+1}$, and we conclude that
$\stab\,M_{p,s}$ has Calabi-Yau dimension
\begin{eqnarray*}
  K_{p,s}(2p+1)-1 & = & \frac{(s+1)(2p+1)}{p+1} - 1
  = \frac{(s+1)(2p+1)-2(p+1)}{p+1} + 1 \\
  & = & \frac{s(2p+1)-1}{p+1} + 1 = u + 1,
\end{eqnarray*}
where the last equality holds by assumption on $u$. 

\medskip
{\em $u$-cluster tilting object. }
To find a $u$-cluster tilting object $X$ in $\stab\,M_{p,s}$, recall
that the projective indecomposable $M_{p,s}$-modules are either
u\-ni\-se\-ri\-al or biserial, and that correspondingly, the vertices
in the quiver of $M_{p,s}$ are called uniserial or biserial. The
position of the corresponding simple modules in the stable AR quiver is
well-known; in particular, the simple modules corresponding to
biserial vertices occur in the centre line of the stable AR quiver.
As in the case of Nakayama algebras, we define the module $X$
as the direct sum of the projective indecomposable modules and the
indecomposable modules $x_1, \ldots, x_{2p+1}$ lying on a slice as in
Figure \ref{fig:A2pp1} such that the module $x_{p+1}$ is a simple
module $S_v$ corresponding to a biserial vertex $v$ of the quiver of
$M_{p,s}$.
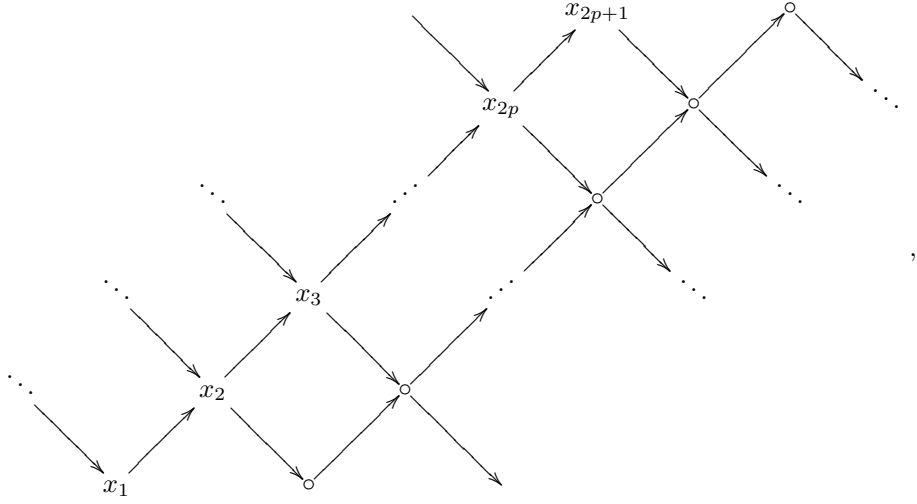
\begin{figure}
\[
  \vcenter{
  \xymatrix @+1pc @!0 {
& & & & {} \ar[dr] & & x_{2p+1} \ar[dr] & & *{\circ} \ar[dr] & \\
& & & & & x_{2p} \ar[dr] \ar[ur] & & *{\circ} \ar[dr] \ar[ur] & & \ddots \\
& & \ddots \ar[dr] & & \adots \ar[ur] & & *{\circ} \ar[dr] \ar[ur] & & \ddots & \\
& \ddots \ar[dr] & & x_3 \ar[dr] \ar[ur] & & \adots \ar[ur] & & \ddots & & \\
\ddots \ar[dr] & & x_2 \ar[dr] \ar[ur] & & *{\circ} \ar[dr] \ar[ur] & & & & & \\
& x_1 \ar[ur] & & *{\circ} \ar[ur] & & {} & & & & \\
            }
          },
\]
\caption{The indecomposable modules $x_1, \ldots, x_{2p+1}$ for the
M\"obius algebra} 
\label{fig:A2pp1}
\end{figure}
The other modules in this slice can also be described.  For
$j=1,\ldots,p$, the module $x_j$ is the uniserial module of length
$p+2-j$ with top $S_v$, and the module $x_{p+j+1}$ is the uniserial
module of length $j+1$ with socle $S_v$.

In particular, the bottom $p$ maps are epimorphisms and the upper $p$
maps are monomorphisms.  The composition of all $2p$ maps in such a
slice is non-zero, mapping the top onto the socle.  Most importantly
for us, it does not factor through a projective module, i.e., it is a
non-zero morphism in the stable module category of $M_{p,s}$.  From
this it follows easily that the stable endomorphism ring of the module
$X$ is isomorphic to $kA_{2p+1}$.

We now show that $X$ is $u$-cluster tilting. This argument is also
analogous to the Nakayama algebra case. The crucial difference is that
now $u$ is odd.  Hence, the forbidden regions defined in \cite{Iyama}
and discussed in the proof of Theorem \ref{thm:Nakayama} are as in
Figure \ref{fig:A2pp1_regions}.
\begin{figure}
\[
  \vcenter{
  \xymatrix @-0.55pc @!0 {
    & & *{{\scriptstyle x}_{\scriptscriptstyle 2p+1}} & *{} \ar@{-}[ddrr] & *{} \ar@{-}[ddrr] \ar@{-}[rrrr] & & & &*{} & & & & *{} \ar@{-}[ddrr] & *{} \ar@{-}[ddrr] \ar@{-}[rrrr] & & & & *{} &*{}\ar@{-}[ddrr]&*{*}&&\\
    & *{\adots} \ar[ur] & &{\scriptstyle H(1)} & & & {\scriptstyle H(2)} & & & \cdots & & &{\scriptstyle H(u-2)} & & & {\scriptstyle H(u-1)} & & &{\scriptstyle H(u)} & & *{} &\\
    *{{\scriptstyle x}_{\scriptscriptstyle 1}} \ar[ur] & *{} \ar@{-}[uurr] \ar@{-}[rrrr] & & & & *{} & *{} \ar@{-}[uurr] & & && *{} \ar@{-}[uurr] \ar@{-}[rrrr] & & & & *{} & *{} \ar@{-}[uurr] & *{}\ar@{-}[uurr]\ar@{-}[rrrr]&&&&*{}&*{} \\
                         }
          }
\]
\caption{The sets $H(i)$ for the M\"obius algebra}
\label{fig:A2pp1_regions}
\end{figure}
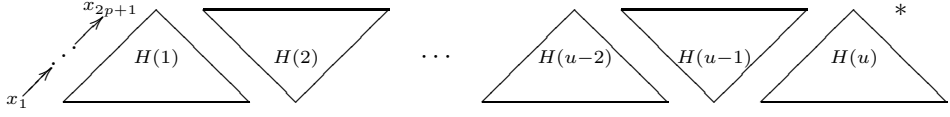
The method from the proof of Theorem \ref{thm:Nakayama} shows that, to
see that $X$ is $u$-cluster tilting, it is sufficient to see that
the vertex $x_1$ is identified with the vertex $*$.
However, each $H(i)$ contains the vertices of the stable AR quiver in an
equilateral triangular region with edges having $2p+1$ vertices.  So
in order for $x_1$ to be identified with $*$, we must identify after
$\frac{u-1}{2}(2p+2)+ (p+2) = u(p+1)+1$
units.  But in fact, one gets the stable AR quiver of $M_{p,s}$ from
$\mathbb{Z}A_{2p+1}$ by identifying after $s(2p+1)$ units, and by the
assumption of the theorem we do indeed have $s(2p+1)=u(p+1)+1$. 

\medskip
{\em Vanishing of negative self-extensions. }
We must show $\underline{\Hom}(X,\Sigma^{-i} X) = 0$ for $i=1, \ldots,
u-1$.  The proof is analogous to the Nakayama case: The action of
$\Sigma^{-1}$ on the stable AR quiver is again just $\omega$, and the
forbidden region of $X$ along with the direct summands of $X$ and of
$\omega X$ are as in Figure \ref{fig:AnH2}.
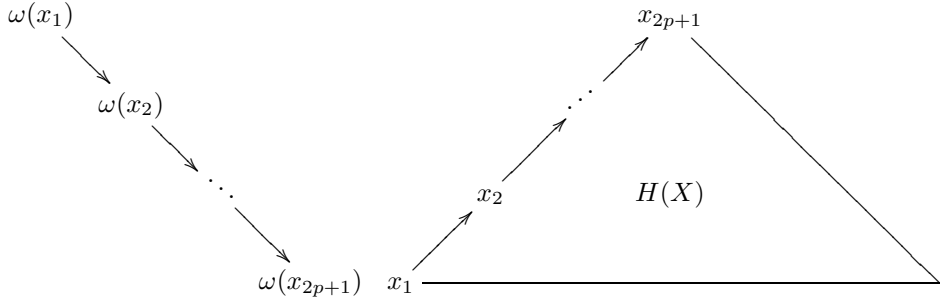
\begin{figure}
\[
  \xymatrix @+0.8pc @!0 {
    \omega(x_1) \ar[dr] & & & & & & & x_{2p+1} \ar@{-}[dddrrr] & \\
    & \omega(x_2) \ar[dr] & & & & & \adots \ar[ur] & & & \\
    & & \ddots \ar[dr] & & & x_2 \ar[ur] & & H(X) & \\
    & & & \omega(x_{2p+1}) & x_1 \ar[ur] \ar@{-}[rrrrrr] & & & & & & *{} & \\
                        }
\]
\caption{The set $H(X)$ and direct summands of $X$ and $\omega X$ for
the M\"obius algebra}
\label{fig:AnH2}
\end{figure}
The only way we could fail to get $\underline{\Hom}(X,\Sigma^{-i} X) =
0$ would be if we took $i$ so large that the $\omega^i(x_j)$ made it
all the way around the stable AR quiver and reached the forbidden region from
the right.  In fact, let us look at the largest relevant integer,
$u - 1$.  As $\omega^2$ is just a shift by $2p+2$ units to the left, we
have that $\omega^{u-1} = (\omega^2)^{\frac{u-1}{2}}$ is a shift by
$\frac{u-1}{2}(2p+2) = (u-1)(p+1) = s(2p+1) - (p+2)$
units to the left.  The stable AR quiver has a circumference of $s(2p+1)$
units, so the $\omega^{u-1}(x_j)$ lie strictly to the right of the
forbidden region.  (Note that the stable AR quiver is a M\"{o}bius band, and
the change of orientation means that, although $u-1$ is even, the
$\omega^{u-1}(x_j)$ form a diagonal line perpendicular, not parallel,
to the line of the $x_j$.)  So $\underline{\Hom}(X,\Sigma^{-i} X)$ is
zero for $i = u - 1$, and hence certainly also for all values $i = 1,
\ldots, u - 1$.  This completes the proof.
\end{proof}

\begin{Remark}
Note that as a special case of Theorem \ref{thm:Mobius}, the
$1$-cluster category of type $A_3$ is triangulated equivalent to
$\stab\, M_{1,1}$.  The M\"obius algebra $M_{1,1}$ is isomorphic to
the preprojective algebra of Dynkin type $A_3$.

This is the only case where a $1$-cluster category is triangulated
equivalent to the stable module category of a selfinjective algebra of
finite representation type and tree class $A_n$.  This follows from
the complete classification of representation-finite selfinjective
algebras of stable Calabi-Yau dimension 2 given in \cite[cor.
3.10]{ES}.
\end{Remark}

\section{Dynkin type $D$}
\label{sec:typeD}

This section proves Theorem D from the introduction.

Asashiba's paper \cite{Asashiba1} gives a derived and stable
equivalence classification of selfinjective algebras of finite
representation type.  If the tree class of the stable AR quiver is
Dynkin type $D$, then there are three families of representatives of
algebras denoted
\begin{itemize}

  \item  $(D_n,s,1)$ with $n \geq 4$, $s \geq 1$,

\medskip
  
  \item  $(D_n,s,2)$ with $n \geq 4$, $s \geq 1$,

\medskip

  \item  $(D_{3m},\frac{s}{3},1)$ with $m \ge 2$, $s \geq 1$, $3 \nmid s$.

\end{itemize}
It follows from \cite[cor.\ 1.7]{BS} that the stable AR quivers of
these algebras are cylinders with the following circumferences.
\begin{itemize}

  \item  For $(D_n,s,1)$ and $(D_n,s,2)$ the circumference is $s(2n-3)$.

\medskip

  \item  For $(D_{3m},\frac{s}{3},1)$ the circumference is $s(2m-1)$.

\end{itemize}

By Subsection \ref{subsec:clusterD}, the AR quiver of the $u$-cluster
category of type $D_n$ is a cylinder of circumference $u(n-1)+1$.  So in
order for the stable categories $\stab\,(D_n,s,1)$ or
$\stab\,(D_n,s,2)$ to be $u$-cluster categories we need
\[
  u(n-1)+1 = s(2n-3).
\]
In particular, this implies
\[
  u\equiv\,-(n-1)^{-1}\,\equiv\,-2 \,\modulo\, (2n-3).
\]
Likewise, for the stable category $\stab\,(D_{3m},\frac{s}{3},1)$ to
be a $u$-cluster category, we need
\begin{equation}
\label{equ:d}
  u(3m-1)+1 = s(2m-1).
\end{equation}
In particular, this implies
\[
  u\equiv\,-m^{-1}\,\equiv\,-2 \,\modulo\, (2m-1).
\]
Moreover, recall that in the definition of the algebras
$(D_{3m},\frac{s}{3},1)$ the case $3 \mid s$ is excluded. 
In the situation of equation \eqref{equ:d} we have 
\[
  3 \nmid s 
  \; \Longleftrightarrow \;
  u(3m-1)+1 \not\equiv \, 0 \,\modulo\, 3(2m-1)
  \; \Longleftrightarrow \;
  u \not\equiv \, -(3m-1)^{-1} \, \equiv \, -2 \,\modulo\, (6m-3).
\]

Indeed, these conditions turn out also to be sufficient.  Note that,
setting $n=3m$, the forbidden case $u \equiv -2 \,\modulo\, (6m-3)$ for
the algebras $(D_{3m},\frac{s}{3},1)$ is precisely the case
$u\,\equiv\,-2 \,\modulo\, (2n-3)$ in which the algebras $(D_n,s,1)$ and
$(D_n,s,2)$ can be applied.

The main result of this section is the following which restates
Theorem D from the introduction.

\begin{Theorem}
\label{thm:n_even}
Let $m, n, u$ be integers with $u \geq 1$.
\begin{enumerate}

\item Suppose that $n\ge 4$ is even and $u \equiv -2 \,\modulo\,
  (2n-3)$.  Then the $u$-cluster category of type $D_{n}$ is
  equivalent as a triangulated category to the stable module category
\[
  \stab\, (D_{n},\frac{u(n-1)+1}{2n-3},1).
\]

\medskip

\item
Suppose that $n\ge 5$ is odd and $u \equiv -2 \,\modulo\, (2n-3)$.

\medskip
\noindent
If $u$ is even, then the $u$-cluster category
of type $D_{n}$ is triangulated e\-qui\-va\-lent to the stable 
module category
$\stab\, (D_{n},\frac{u(n-1)+1}{2n-3},1).$

\medskip
\noindent
If $u$ is odd, then the $u$-cluster category
of type $D_{n}$ is triangulated e\-qui\-va\-lent to the stable 
module category
$ \stab\, (D_{n},\frac{u(n-1)+1}{2n-3},2).$

\medskip

\item
Suppose that $m\ge 2$ and $u\equiv\, -2 \,\modulo\, (2m-1)$ but 
$u\not\equiv\, -2 \,\modulo\, (6m-3)$. 
Suppose moreover that not both $m$ and $u$ are odd.
Then the $u$-cluster category
of type $D_{3m}$ is equivalent as a triangulated category to the stable 
module category $\stab\, (D_{3m},\frac{s}{3},1)$
where $s=\frac{u(3m-1)+1}{2m-1}$. 

\end{enumerate}
\end{Theorem}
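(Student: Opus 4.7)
The plan is to mirror the three-step strategy used in the proofs of Theorems~\ref{thm:Nakayama} and~\ref{thm:Mobius}, invoking the Keller--Reiten Morita theorem \cite[thm.\ 4.2]{KellerReiten2}. For each of the three families of selfinjective algebras listed in Theorem~\ref{thm:n_even} (that is, $(D_n,s,1)$ with $n$ even, $(D_n,s,1)$ or $(D_n,s,2)$ with $n$ odd, and $(D_{3m},s/3,1)$) we need to check, for the stable module category in question: (a) that its Calabi--Yau dimension is $u+1$; (b) the existence of a $u$-cluster tilting object $X$ whose stable endomorphism ring is $kD_n$, respectively $kD_{3m}$; and (c) the vanishing $\underline{\Hom}(X,\Sigma^{-i}X) = 0$ for $i = 1,\ldots,u-1$. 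The theorem then follows at once.

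For (a), I would invoke Dugas' computations \cite{Dugas} of the stable Calabi--Yau dimensions of the selfinjective algebras of finite representation type in Asashiba's list. Each of the three families $(D_n,s,1)$, $(D_n,s,2)$, $(D_{3m},s/3,1)$ admits an explicit closed formula in \cite{Dugas} for its stable Calabi--Yau dimension in terms of $n$ (or $m$) and $s$. I would substitute $s=\frac{u(n-1)+1}{2n-3}$ in cases (i) and (ii) and $s=\frac{u(3m-1)+1}{2m-1}$ in case (iii), and verify by elementary modular arithmetic that Dugas' formula evaluates to $u+1$. The congruence hypotheses $u\equiv -2\,\modulo\,(2n-3)$, respectively $u\equiv -2\,\modulo\,(2m-1)$ and $u\not\equiv -2\,\modulo\,(6m-3)$, are exactly what is needed for the integrality and parity constraints in Dugas' formulas to hold, and for the algebra type ($1$ versus $2$) to match.

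For (b), I would use Iyama's combinatorial description of $u$-cluster tilting subsets in stable AR quivers of Dynkin type \cite[sec.\ 4]{Iyama}, adapted to type $D$. As in the type $A$ case, one introduces a coordinate system on the stable AR quiver, associates to each vertex $x$ a ``forbidden region'' $H^+(x)$, and checks that a candidate subset $S$ is $u$-cluster tilting by showing that its complement is covered by the $H^+(\tau^{-1}\omega^{-i+1}x)$ for $x\in S$ and $1\le i\le u$. I would choose as $S$ the vertices of a $D_n$-shaped slice of the stable AR quiver (including the two exceptional vertices of the $D_n$ tree), together with the projectives in the module category, so that the stable endomorphism ring of the resulting module $X$ is $kD_n$. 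The tiling property then becomes an arithmetic assertion: the union of forbidden regions $\bigcup_{1\le i\le u}H(i)$ covers exactly the portion of $\BZ D_n$ between $S$ and its image under the relevant power of $\tau^{-1}\Sigma$, whose width by Subsection~\ref{subsec:clusterD} is $u(n-1)+1$, respectively $u(3m-1)+1$. By the definition of $s$ in the theorem, this equals the circumference of the stable AR quiver given by \cite[cor.\ 1.7]{BS}, and so the tiling closes up exactly. In case (ii) with $u$ odd, one further checks that the identification involves the switch of the exceptional pair, matching the factor $2$ in $(D_n,s,2)$, whereas in case (ii) with $u$ even and in case (iii), the identification is a pure shift.

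For (c), the argument is entirely parallel to the type $A$ case: $\Sigma^{-1}$ acts as $\omega$ on the stable AR quiver, and a sufficiently large power $\omega^{u-1}$ shifts $X$ by $(u-1)\cdot\frac{n-1}{1}$ units (or the analogous quantity in case (iii)), which is strictly less than the circumference of the cylinder computed above, so the images of $X$ under $\omega^i$ for $1\le i\le u-1$ remain to the left of the forbidden region $H(X)$ and cannot re-enter it from the right. I expect the main obstacle to be the case analysis in step (b): verifying that the choice of slice and the orientation of the gluing of the stable AR quiver match the algebra type $1$ or $2$ prescribed by the theorem, particularly in case (ii) where the parity of $u$ governs whether the exceptional vertices are permuted. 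The remaining arithmetic in steps (a) and (c) is routine once the parameters are matched correctly.
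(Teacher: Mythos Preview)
Your proposal is correct and follows essentially the same approach as the paper: the three-step verification of the Keller--Reiten hypotheses via Dugas' Calabi--Yau dimension computations, Iyama's combinatorial description of $u$-cluster tilting subsets applied to a $D_n$-shaped slice, and the $\omega$-shift argument for vanishing of negative self-extensions. One minor organisational difference: the paper treats the $u$-cluster tilting object uniformly across parts (i)--(iii) and places the distinction between torsion types $1$ and $2$ entirely in the Calabi--Yau dimension step (where Dugas' formulas differ for $(D_n,s,1)$ and $(D_n,s,2)$), rather than in the gluing of the stable AR quiver as you suggest in (b); the paper also spells out why $\underline{\End}(X)\cong kD_n$ via standardness and Riedtmann's bound on $\dim\underline{\Hom}(x_i,x_j)$, which you should not take for granted.
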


\begin{proof}
As in type A, the proof is divided into three sections verifying the
conditions in Keller and Reiten's Morita theorem \cite[thm.\
4.2]{KellerReiten2}.

\medskip
{\em Calabi-Yau dimension. }
We must show that each of the stable module categories occurring in
the theorem has Calabi-Yau dimension $u+1$.

For part (i) we suppose that $n\ge 4$ is even and we consider the
algebra $(D_{n},\frac{u(n-1)+1}{2n-3},1)$. The Calabi-Yau dimension of
its stable module category can be determined using \cite[thm.\
6.1]{Dugas}, in which both parts can apply. The relevant invariants
occurring there are the frequency $f=\frac{u(n-1)+1}{2n-3}$, the
Coxeter number $h_{D_n}=2n-2$ and the related number $h_{D_n}^*=
h_{D_n}/2=n-1$, and $m_{D_n}=h_{D_n}-1=2n-3$.

If \cite[thm.\ 6.1(1)]{Dugas} applies then the Calabi-Yau
dimension $d$ of $\stab\, (D_{n},\frac{u(n-1)+1}{2n-3},1)$ satisfies
$$d \equiv 1 - (h_{D_n}^*)^{-1} \,\modulo\, fm_{D_n}
\equiv 1- (n-1)^{-1} \,\modulo\, (u(n-1)+1)
$$
and $0 < d \le u(n-1)+1$. Upon multiplication with $n-1$ this becomes
$d(n-1) \equiv n-2 \,\modulo\, (u(n-1)+1)$
which is easily checked to be satisfied by $d=u+1$. 

If \cite[thm.\ 6.1(2)]{Dugas} applies then the Calabi-Yau
dimension $d$ of $\stab\, (D_{n},\frac{u(n-1)+1}{2n-3},1)$ has the
form $d=2r+1$ where $r$ is determined by
\begin{equation}
\label{equ:Dn:neven}
r \equiv  - (h_{D_n})^{-1} \,\modulo\, fm_{D_n}
\equiv -(2n-2)^{-1} \,\modulo\, (u(n-1)+1)
\end{equation}
and $0 \le r < u(n-1)+1$. Since \cite[thm.\ 6.1(2)]{Dugas}
applies we know from the assumptions stated in \cite[thm.\
6.1(1)]{Dugas} that $2\nmid f=\frac{u(n-1)+1}{2n-3}$ from which it
follows that $u$ is even (since $n$ is even). Setting $r=\frac{u}{2}$
it is readily checked that it satisfies (\ref{equ:Dn:neven}).
Therefore the Calabi-Yau dimension is $2r+1=u+1$, as required.

For part (ii), we suppose that $n\ge 5$ is odd and we consider the
algebras $(D_{n},\frac{u(n-1)+1}{2n-3},1)$ and
$(D_{n},\frac{u(n-1)+1}{2n-3},2)$, depending on whether $u$ is even or
odd.

If $u$ is even then the Calabi-Yau dimension of $\stab\,
(D_{n},\frac{u(n-1)+1}{2n-3},1)$ can be determined using
\cite[thm.\ 6.1(2)]{Dugas} (note that \cite[thm.\ 6.1(1)]{Dugas}
only applies for $n$ even).  The only difference to the case of $n$
even is the invariant $h_{D_n}^*$ which is now equal to $2n-2$ instead
of $n-1$. But this invariant does not occur in \cite[thm.\
6.1(2)]{Dugas} so the proof for $n$ even carries over verbatim and gives
that $\stab\, (D_{n},\frac{u(n-1)+1}{2n-3},1)$ has Calabi-Yau
dimension $u + 1$.

If $u$ is odd (and $n \geq 5$ is still odd) then the Calabi-Yau
dimension of $\stab\,(D_{n},\frac{u(n-1)+1}{2n-3},2)$ can be
determined using \cite[prop.\ 7.3]{Dugas}. Note that since $n$ is odd,
the frequency $f=\frac{u(n-1)+1}{2n-3}$ is odd as well, and hence
\cite[prop.\ 7.3(1)]{Dugas} applies. From this we get that the
Calabi-Yau dimension of $ \stab\, (D_{n},\frac{u(n-1)+1}{2n-3},2)$ is
of the form $d=2r$ where $r\equiv (n-2)(2n-2)^{-1} \,\modulo\,
(u(n-1)+1)$ and $0 < r < u(n-1)+1$. Upon multiplication with $2n-2$
the latter equation becomes $2r(n-1)\equiv n-2 \,\modulo\, (u(n-1)+1)$
which is easily seen to be satisfied by $r=\frac{u+1}{2}$. Therefore,
the Calabi-Yau dimension is $d=2r=u+1$, as required.

For part (iii), we consider the algebras $(D_{3m},\frac{s}{3},1)$
where $s=\frac{u(3m-1)+1}{2m-1}$.  The Calabi-Yau dimension of the
stable module category can again be determined using \cite[thm.\
6.1]{Dugas}.

If $m$ is even then the invariants we need are the frequency
$f=\frac{s}{3}=\frac{u(3m-1)+1}{3(2m-1)}$, the Coxeter number
$h_{D_{3m}}=6m-2$ and the related numbers $m_{D_{3m}}=h_{D_{3m}}-1=
6m-3$, and $h_{D_{3m}}^* = h_{D_{3m}}/2 = 3m-1$.

If \cite[thm.\ 6.1(1)]{Dugas} applies then the Calabi-Yau dimension
$d$ of $\stab\, (D_{3m},\frac{s}{3},1)$ is determined by
$$d \equiv 1 - (h_{D_{3m}}^*)^{-1} \,\modulo\, fm_{D_{3m}}
\equiv 1 - (3m-1)^{-1} \,\modulo\, (u(3m-1)+1)
$$ 
and $0 < d \le u(3m-1)+1$. Clearly, $d=u+1$ satisfies these properties
and hence the Calabi-Yau dimension is $u+1$, as claimed.

If \cite[thm.\ 6.1(2)]{Dugas} applies then the Calabi-Yau dimension
$d$ of $\stab\, (D_{3m},\frac{s}{3},1)$ is of the form $d=2r+1$ where
$r$ is determined by
\[
  r \equiv - (h_{D_{3m}})^{-1} \,\modulo\, fm_{D_{3m}}
    \equiv -(6m-2)^{-1} \,\modulo\, (u(3m-1)+1)
\]
and $0 \le r < u(3m-1)+1$.  Note that our assumptions in this case
imply that $u$ is even; otherwise the frequency $f$ would be even and
we would be in the situation of \cite[thm.\ 6.1(1)]{Dugas}. Setting $r
= \frac{u}{2}$ is easily seen to satisfy the above properties,
i.e. the Calabi-Yau dimension is $2r+1=u+1$, as desired.

Finally, if $m$ is odd then only \cite[thm.\ 6.1(2)]{Dugas} can
apply. The computation of the Calabi-Yau dimension carries over
verbatim from the previous one; in fact, by assumption in part (iii)
of our theorem $u$ has to be even (since $m$ is odd).  Hence also for
$m$ odd and $u$ even we get the Calabi-Yau dimension of $\stab\,
(D_{3m},\frac{s}{3},1)$ to be $u+1$, as required.

\medskip
{\em $u$-cluster tilting object. }
To find a $u$-cluster tilting object $X$ in the stable module
category, the method is the same for parts (i)--(iii).  In part (iii)
we set $n = 3m$ so that in each case $n$ denotes the number of
vertices in the underlying Dynkin quiver of type $D_n$.

Let $X$ be the sum of the projective indecomposable modules and the
indecomposable modules $x_1, \ldots, x_{n-2}, x_{n-1}^-, x_{n-1}^+$
whose positions in the stable AR quiver of the relevant algebra are
given by Figure \ref{fig:ZDn2}.
\begin{figure}
\[
  \def\objectstyle{\scriptstyle}
  \xymatrix @+1pc @!0 {
& & & & {} \ar[dr] & & {\scriptstyle x_{n-1}^+} \ar[dr] & & *{\circ} \ar[dr] & \\
& & & & \cdots \ar[r] & {\scriptstyle x_{n-2}} \ar[dr] \ar[ur] \ar[r] & {\scriptstyle x_{n-1}^-} \ar[r] & *{\circ} \ar[dr] \ar[ur] \ar[r] & *{\circ} \ar[r] & \ddots \\
& & \ddots \ar[dr] & & \adots \ar[ur] & & *{\circ} \ar[dr] \ar[ur] & & \ddots & \\
& \ddots \ar[dr] & & {\scriptstyle x_3} \ar[dr] \ar[ur] & & \adots \ar[ur] & & \ddots & & \\
\ddots \ar[dr] & & {\scriptstyle x_2} \ar[dr] \ar[ur] & & *{\circ} \ar[dr] \ar[ur] & & & & & \\
& {\scriptstyle x_1} \ar[ur] & & *{\circ} \ar[ur] & & {} & & & & \\
            }
\]
\caption{The indecomposable modules $x_1, \ldots, x_{n-2}, x_{n-1}^-,
x_{n-1}^+$ in Dynkin type $D$}
\label{fig:ZDn2}
\end{figure}
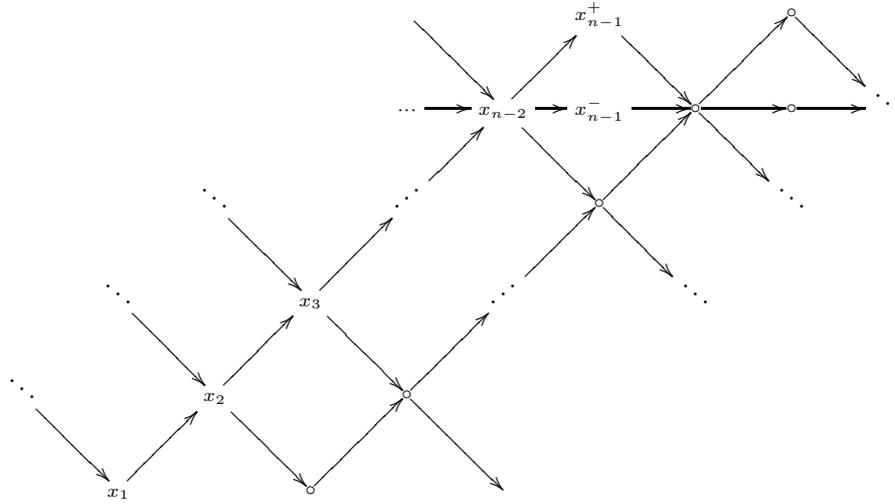
We show that $X$ is $u$-cluster tilting
in the stable module
category.  By Proposition \ref{prop:max-orth}, it is enough to prove
that it is 
$u$-cluster tilting in the abelian category of modules.  Following
\cite[def. 4.2]{Iyama}, introduce a coordinate system on the stable AR quiver
as in Figure \ref{fig:Dn_coordinates}.
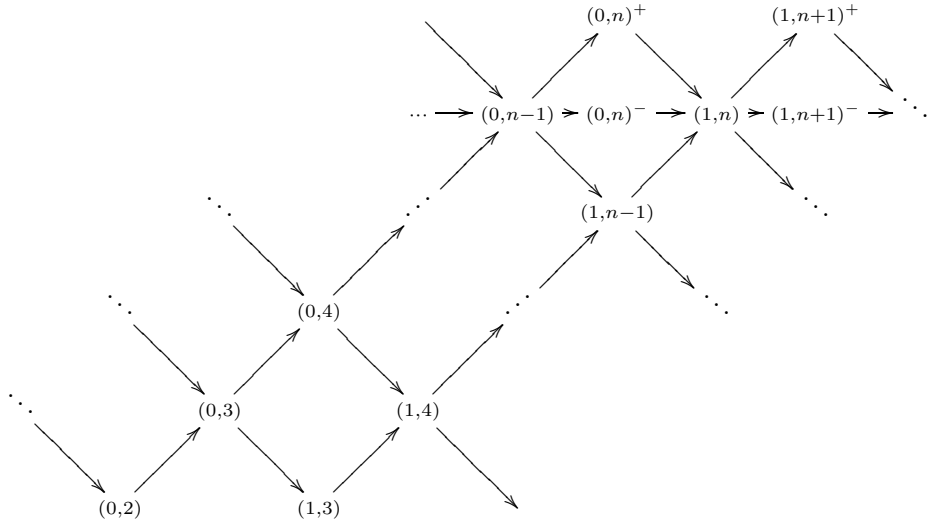
\begin{figure}
\[
  \def\objectstyle{\scriptstyle}
  \xymatrix @+1.1pc @!0 {
& & & & {} \ar[dr] & & (0,n)^+ \ar[dr] & & (1,n+1)^+ \ar[dr] & \\
& & & & \cdots \ar[r] & (0,n-1) \ar[dr] \ar[ur] \ar[r] & (0,n)^- \ar[r] & (1,n) \ar[dr] \ar[ur] \ar[r] & (1,n+1)^- \ar[r] & \ddots \\
& & \ddots \ar[dr] & & \adots \ar[ur] & & (1,n-1) \ar[dr] \ar[ur] & & \ddots & \\
& \ddots \ar[dr] & & (0,4) \ar[dr] \ar[ur] & & \adots \ar[ur] & & \ddots & & \\
\ddots \ar[dr] & & (0,3) \ar[dr] \ar[ur] & & (1,4) \ar[dr] \ar[ur] & & & & & \\
& (0,2) \ar[ur] & & (1,3) \ar[ur] & & {} & & & & \\
                        }
\]
\caption{The coordinate system in Dynkin type $D$}
\label{fig:Dn_coordinates}
\end{figure}
To each vertex $x$ in the stable AR quiver, associate a `forbidden
region' $H^+(x)$ defined as in Figure \ref{fig:DnH} (see
\cite[sec. 4.2]{Iyama}), with the proviso that if $x$ is not one of
the `exceptional' vertices indicated by superscripts $+$ and $-$, then
$H^+(x)$ contains all the exceptional vertices along the relevant part
of the top line in the diagram, but if $x$ is exceptional, say $x =
(i,i+n)^+$, then $H^+(x)$ only contains half the exceptional vertices
along the relevant part of the top line, namely $(i,i+n)^+,
(i+1,i+n+1)^-, (i+2,i+n+2)^+, \ldots$, starting with $x$ itself.
\begin{figure}
\[
  \xymatrix @+1pc @!0 {
& *{(i,i+n)} \ar@{-}[rrrr] & & & & *{(j-2,j+n-2)} \ar@{-}[dr] & \\
*{x = (i,j)} \ar@{-}[ur] \ar@{-}[ddrr] & & & {\textstyle H^+(x)} & & &*{(i+n-2,j+n-2) {\textstyle ,}} \\
& & & *{(j-2,i+n)} \ar@{-}[dr] & & & \\
& & *{(j-2,j)} \ar@{-}[ur] & & *{(i+n-2,i+n)} \ar@{-}[uurr] & & \\
            }
\]
\caption{The set $H^+(x)$ in Dynkin type $D$}
\label{fig:DnH}
\end{figure}
Define the following automorphisms of the stable AR quiver: $\theta$ is the
identity on the non-exceptional vertices and switches $(i,i+n)^+$ and
$(i,i+n)^-$.  The AR translation $\tau$ is given by moving each vertex
one unit to the left.  And finally, $\omega = \theta(\tau
\theta)^{n-1}$.  A subset $S$ of the vertex set $M$ in the stable AR quiver
is called 
$u$-cluster tilting if
\[
  M \setminus S = \bigcup_{x \in S, 0 < i \leq u} 
                    H^+(\tau^{-1}\omega^{-i+1}x),
\]
see \cite[sec. 4.2]{Iyama}.
For our choice of $X$, the set $S$ is given by the modules $x_1$,
$\ldots$, $x_{n-2}$, $x_{n-1}^-$, $x_{n-1}^+$.  But then the sets
\[
  H(i) = \bigcup_{x \in S} H^+(\tau^{-1}\omega^{-i+1}x)
\]
can easily be verified to sit in the stable AR quiver as in Figure
\ref{fig:DnHs} where each parallellogram has $n-1$ vertices on each
edge.
\begin{figure}
\[
  \def\objectstyle{\scriptstyle}
  \vcenter{
  \xymatrix @+0.65pc @!0 {
& & & *{x_{n-1}^+} & & *{} \ar@{-}[rr] & & *{} & & *{} \ar@{-}[rr] & & *{}\\
& & *{x_{n-2}} \ar[ur] \ar[r] & x_{n-1}^- & & {\textstyle H(1)} & & {\textstyle \cdots} & & {\textstyle H(u)} \\
& \adots \ar[ur] \\
*{x_1} \ar[ur] & & *{} \ar@{-}[uuurrr] \ar@{-}[rr] & & *{} \ar@{-}[uuurrr] & & *{} \ar@{-}[uuurrr] \ar@{-}[rr] & & *{}\ar@{-}[uuurrr]\\
                         }
          }
\]
\caption{The sets $H(i)$ in Dynkin type $D$}
\label{fig:DnHs}
\end{figure}
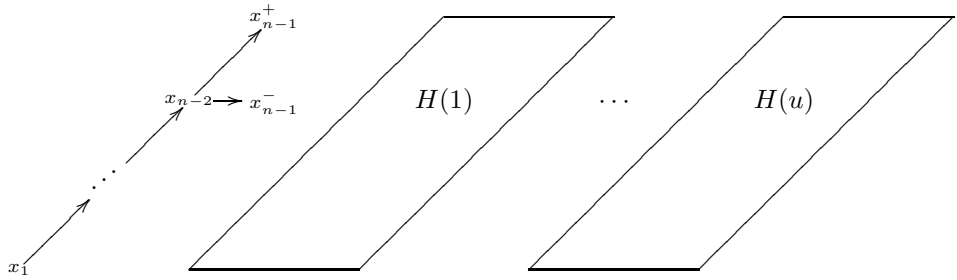
In total, the union
\[
  \bigcup_{0 < i \leq u} H(i) = 
  \bigcup_{x \in S, 0 < i \leq u} 
                    H^+(\tau^{-1}\omega^{-i+1}x)
\]
is a parallellogram with $u(n-1)$ vertices on each horizontal edge.
This means that the parallellogram covers precisely the region between
the $x$'s and their shift by $u(n-1) + 1$ units to the right.

By Subsection \ref{subsec:clusterD}, this is exactly the
number of units after which $\BZ D_n$ is identified with itself to get
the stable AR quiver.  It follows that $S$ is a 
$u$-cluster tilting
set of vertices of the stable AR quiver, and hence $X$ is 
$u$-cluster tilting 
in the module category by \cite[thm.\ 4.2.2]{Iyama}.

To show that the stable endomorphism algebra $\underline{\End}(X)$ is
$kD_n$, we need to see that for each pair of in\-de\-com\-po\-sa\-ble
summands $x_i$ and $x_j$ of $X$, the stable $\Hom$-space
$\underline{\Hom}(x_i,x_j)$ is one-dimensional if $x_i$ is below $x_j$
in the stable AR quiver, and zero otherwise.

The self-injective algebras in the theorem in question are standard,
so each morphism between in\-de\-com\-po\-sa\-ble modules in the
stable category is a sum of compositions of sequences of irreducible
morphisms between indecomposable modules.  Consider such a sequence
which composes to a morphism $x_i \rightarrow x_j$.

If, along the sequence, there is an indecomposable $y$ which is not a
summand of $X$, then $y \rightarrow x_j$ factors through a direct sum
of indecomposable summands of $\tau X$ by \cite[lem.\ VIII.5.4]{ASS}.
But then $x_i \rightarrow y \rightarrow x_j$ factors in the same way,
and this means that it is zero because $\underline{\Hom}(X,\tau X) =
0$ by the methods used in the proof that $X$ is 
$u$-cluster tilting.
Hence $x_i \rightarrow x_j$ can be taken to be a sum of compositions
of sequences of irreducible morphisms which only pass through
indecomposable summands of $X$.  In the stable AR quiver, the arrows between
these summands all point upwards, so it follows that
$\underline{\Hom}(x_i,x_j)$ is zero unless $x_i$ is below $x_j$ in the
stable AR quiver.

On the other hand, if $x_i$ is below $x_j$, then
$\underline{\Hom}(x_i,x_j)$ is non-zero by \cite[sec.\ 4.2 and prop.\ 
4.4.3]{Iyama}.  Finally, 
it follows from
\cite[satz 3.5]{Riedtmann2} that the dimension of
$\underline{\Hom}(x_i,x_j)$ is at most one.

\medskip
{\em Vanishing of negative self-extensions. }
We must show $\underline{\Hom}(X,\Sigma^{-i} X) = 0$ for $i=1, \ldots,
u-1$.  If $v$ and $w$ are indecomposable non-projective modules, we
have $\underline{\Hom}(v,w) = 0$ precisely if the vertex of $w$ is
outside the region $H^+(v)$, see \cite[sec.\ 4.2 and prop.\
4.4.3]{Iyama}.  So we need to check that all vertices corresponding to
indecomposable summands of $\Sigma^{-i}X$ for $i = 1, \ldots, u-1$ are
outside the forbidden region $H(X) = \bigcup_{x} H^+(x)$, where the
union is over the indecomposable summands of $X$.

But the action of $\Sigma^{-1}$ on the stable AR quiver is just
$\omega$.  So Figure \ref{fig:DnH2} shows the forbidden region along
with the $\Sigma^{-i}X$.
\begin{figure}
\[
  \def\objectstyle{\scriptstyle}
  \xymatrix @+0.45pc @!0 {
& & & {\textstyle \Sigma^{-(u-1)}X} & & {\textstyle \cdots} & & {\textstyle 
\Sigma^{-1}X} & & & {\textstyle X} \\
& & & *{\circ} & & & & *{\circ} & & & *{x_{n-1}^+} \ar@{-}[rr] & & *{} \\
& & *{\circ} \ar[ur] \ar[r] & *{\circ} & {\textstyle \cdots} & & *{\circ} \ar[ur] \ar[r] & *{\circ} & {\textstyle \cdots} & *{x_{n-2}} \ar[ur] \ar[r] & x_{n-1}^- \\
& \adots \ar[ur] & & & & \adots \ar[ur] & & & \adots \ar[ur] & {\textstyle H(X)} \\
*{\circ} \ar[ur] & & & & *{\circ} \ar[ur] & & & *{x_1} \ar[ur] \ar@{-}[rr] & & *{} \ar@{-}[uuurrr] \\
                         }
\]
\caption{The set $H(X)$ and direct summands of $\Sigma^{-(u-1)}X,
\ldots, \Sigma^{-1}X, X$ in Dynkin type $D$}
\label{fig:DnH2}
\end{figure}
\label{page:D2}
The only way we could fail to get $\underline{\Hom}(X,\Sigma^{-i} X) =
0$ would be if we took $i$ so large that $\Sigma^i X$ made it all the
way around the stable AR quiver and reached the forbidden region from the
right.

However, this does not happen: $\omega$, and hence $\Sigma^{-1}$,
is a move by $n-1$ units to the left, so $\Sigma^{-(u-1)}X$ is
moved 
$(u-1)(n-1)$
units to the left.  On the other hand, to reach $H(X)$, one has
to move by the circumference of the stable AR quiver minus the
horizontal length of $H(X)$ plus one, and this is
$u(n-1) + 1 - (n - 1) + 1 = (u-1)(n-1) + 2$.
\end{proof}

\begin{Remark}
\label{rem:m_and_u_odd} 
We would like to stress that in part (iii) of Theorem
\ref{thm:n_even}, the assumption that at least one of $m$ and $u$ is
even is necessary. This assumption was unfortunately missing in our
earlier preprint \cite{HolmJorgensenDE}. We are grateful to Alex Dugas
for pointing this out to us.

If both $m$ and $u$ are odd then the Calabi-Yau dimension of the
stable category $\stab\, (D_{3m},\frac{s}{3},1)$ cannot be of the
form $u+1$, as would be needed for being a $u$-cluster category.  In
fact, the Calabi-Yau dimension can again be computed using \cite[thm.\
6.1(2)]{Dugas} (\cite[thm.\ 6.1(1)]{Dugas} does not apply since $m$ is
odd). In particular, the Calabi-Yau dimension is of the form $d=2r+1$
and hence an odd number which makes it impossible to be equal to $u+1$
(since $u$ is odd).

This happens despite the fact that, for $m$ and $u$ odd, the stable
module category $\stab\, (D_{3m},\frac{s}{3},1)$ and the $u$-cluster
category of type $D_{3m}$ both have as AR quiver a cylinder of
circumference $s(2m-1)$.  The reason is that under the AR translation
$\tau$, the exceptional vertices form a single orbit in the
$u$-cluster category but two orbits in the stable category.

As an explicit example, consider the case when $m=3$ and $u=3$. 
Then the Calabi-Yau dimension of the stable module 
category $\stab\, (D_{9},\frac{5}{3},1)$ is, according
to \cite[thm.\ 6.1(2)]{Dugas}, of the form
$2r+1$ where $r$ is determined by
$r \equiv - 16^{-1} \,\modulo\, 25 \equiv 14 \,\modulo\, 25$
and $0\le r < 25$. Thus, $\stab\, (D_{9},\frac{5}{3},1)$ 
has Calabi-Yau dimension 29, which is far from the 
Calabi-Yau dimension 4 of the $3$-cluster category of type 
$D_9$. 
\end{Remark}

\begin{Example}

We illustrate our realizability results in type $D_n$ from Theorem
\ref{thm:n_even} by considering the situation for some small values of
$n$. 

Let us first consider type $D_4$. Then parts (ii) and (iii) of Theorem
\ref{thm:n_even} do not apply. From part (i) we get for every
$u\equiv\,3 \,\modulo\, 5$ that the $u$-cluster category of type $D_4$
is triangulated equivalent to $\stab\,(D_4,\frac{3u+1}{5},1)$.

Let us now consider type $D_6$. From part (i) of Theorem
\ref{thm:n_even} we get for every $u\equiv\,7 \,\modulo\, 9$ that
the $u$-cluster category of type $D_6$ is triangulated equivalent
to the category $\stab\,(D_6,\frac{5u+1}{9},1)$.

Moreover, from part (iii) of Theorem \ref{thm:n_even} we also get that
for every $u\equiv\,1 \,\modulo\, 9$ and every $u\equiv\,4 \,\modulo\,
9$ that the $u$-cluster category of type $D_6$ is triangulated
equivalent to $\stab\,(D_{6},\frac{5u+1}{9},1)$.  Hence, for all
$u\equiv\,1 \,\modulo\, 3$, we get the $u$-cluster category of type
$D_6$ as stable module category of a selfinjective algebra.

We remark that the smallest case $u=1$ states that the $1$-cluster
category of type $D_6$ is triangulated equivalent to the stable module
category of the preprojective algebra of type $A_4$. In fact, the
algebra $(D_{6},\frac{2}{3},1)$ is just this preprojective algebra.
This can be considered as the cluster category version of the
statement that the preprojective algebra of type $A_4$ is of cluster
type $D_6$ \cite[sec. 19.2]{GLS_semcan1}.  For more details on the
close connection between preprojective algebras and cluster theory we
refer to \cite{GLS_rigid}.
\end{Example}

\section{Dynkin type $E$}
\label{sec:typeE}

This section proves Theorem E from the introduction.

Asashiba's paper \cite{Asashiba1} gives that if the tree class of the
stable AR quiver is Dynkin type $E$, then there are four families of
representatives of self-injective algebras denoted
\begin{itemize}

  \item  $(E_6,s,1)$,

\medskip

  \item  $(E_6,s,2)$,

\medskip

  \item  $(E_7,s,1)$,

\medskip

  \item  $(E_8,s,1)$,

\end{itemize}
all with $s\ge 1$.  Recall that in type $E$, nonstandard algebras do
not occur.  It follows from \cite[cor.\ 1.7]{BS} that the stable AR
quivers of these algebras are cylinders with the following
circumferences.
\begin{itemize}

  \item  For $(E_6,s,1)$ and $(E_6,s,2)$ the circumference is $11s$.

\medskip

  \item  For $(E_7,s,1)$ the circumference is $17s$.

\medskip

  \item  For $(E_8,s,1)$ the circumference is $29s$.

\end{itemize}

By Subsection \ref{subsec:clusterE}, the AR quiver of the $u$-cluster
category of type $E_6$ is a cylinder or a M\"{o}bius band of
circumference $6u+1$ (this number is independent of $u$ being even or
odd).  So in order for the stable categories $\stab\,(E_6,s,1)$ or
$\stab\,(E_6,s,2)$ to be $u$-cluster categories we need $6u+1 = 11s$.
In particular, this implies
\[
  u\equiv\,-6^{-1}\,\equiv\,-2 \,\modulo\, 11.
\]
Likewise, the AR quiver of the $u$-cluster category of type $E_7$ is a
cylinder of circumference $9u+1$.  So in order for the stable
ca\-te\-go\-ry $\stab\,(E_7,s,1)$ to be a $u$-cluster category we need
$9u+1 = 17s$.  In particular, this implies
\[
  u\equiv\,-9^{-1}\,\equiv\,-2 \,\modulo\, 17.
\]
Finally, the AR quiver of the $u$-cluster category of type $E_8$ is a
cylinder of circumference $15u+1$.  So in order for the stable module
category $\stab\,(E_8,s,1)$ to be $u$-cluster category we need $15u+1
= 29s$.  In particular, this implies
\[
  u\equiv\,-15^{-1}\,\equiv\,-2 \,\modulo\, 29.
\]

Indeed, these conditions turn out also to be sufficient.  The main
result of this section is the following which restates Theorem E from
the introduction.

\begin{Theorem}
\label{thm:E6}
Let $u \geq 1$ be an integer.
\begin{enumerate}

\item If $u\equiv\, -2 \,\modulo\, 11$ then the $u$-cluster category of
  Dynkin type $E_6$ is equivalent as a triangulated category to the
  stable module category $\stab\, (E_6,\frac{6u+1}{11},1)$ if $u$ is
  even, and to the stable module category $\stab\,
  (E_6,\frac{6u+1}{11},2)$ if $u$ is odd.

\medskip

\item If $u\equiv\, -2 \,\modulo\, 17$ then the $u$-cluster category of
  Dynkin type $E_7$ is equivalent as a triangulated category to the
  stable module category $\stab\, (E_7,\frac{9u+1}{17},1)$.

\medskip

\item If $u\equiv\, -2 \,\modulo\, 29$ then the $u$-cluster category of
  Dynkin type $E_8$ is equivalent as a triangulated category to the
  stable module category $\stab\, (E_8,\frac{15u+1}{29},1)$.

\end{enumerate}
\end{Theorem}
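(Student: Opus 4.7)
The plan is to verify the three hypotheses of Keller and Reiten's Morita theorem \cite[thm.\ 4.2]{KellerReiten2} for each of the three stable module categories in the statement, proceeding in close analogy with the proofs of Theorems~\ref{thm:Nakayama}, \ref{thm:Mobius}, and \ref{thm:n_even}. That is, in each case one must show that the category has Calabi-Yau dimension $u+1$, that it admits a $u$-cluster tilting object $X$ with hereditary stable endomorphism algebra $\underline{\End}(X) \cong kE_n$, and that $X$ satisfies $\underline{\Hom}(X,\Sigma^{-i}X)=0$ for $i=1,\ldots,u-1$. The numerical discussion preceding the theorem shows that the stated congruences on $u$ are precisely what is needed for the circumference of the stable AR quiver of the proposed selfinjective algebra to equal $6u+1$, $9u+1$, or $15u+1$; this is a necessary first sanity check but not by itself sufficient.

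For the Calabi-Yau dimension I would invoke Dugas's results \cite[thm.\ 6.1]{Dugas} and \cite[prop.\ 7.3]{Dugas}, feeding in the Coxeter numbers $h_{E_6}=12$, $h_{E_7}=18$, $h_{E_8}=30$ and the associated invariants $m_{E_n}=h_{E_n}-1\in\{11,17,29\}$ together with $h_{E_6}^{*}=6$. In each case the relevant congruence reduces to a condition modulo the circumference ($6u+1$, $9u+1$, or $15u+1$), and a direct check shows that $d=u+1$ is the unique solution in the required range. The parity split in part~(i) is naturally absorbed by Dugas's two cases: for $u$ even the algebra $(E_6,s,1)$ is handled by \cite[thm.\ 6.1]{Dugas}, while for $u$ odd the algebra $(E_6,s,2)$ is covered by \cite[prop.\ 7.3]{Dugas}.

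For the $u$-cluster tilting object I would take $X$ to be the direct sum of the projective indecomposable modules together with $n$ non-projective indecomposables $x_1,\ldots,x_n$ positioned along a slice of $\BZ E_n$ whose underlying shape is the Dynkin quiver $E_n$ of Figure~\ref{fig:En}. By Proposition~\ref{prop:max-orth} it suffices to verify $u$-cluster tiltingness in $\mod\,A$, and this reduces to Iyama's combinatorial criterion \cite[sec.\ 4.2]{Iyama}: one checks that the union $\bigcup_{x\in S,\,0<i\le u}H^{+}(\tau^{-1}\omega^{-i+1}x)$ covers precisely the strip between the slice and its translate by the full circumference. The identification $\underline{\End}(X)\cong kE_n$ then follows along the lines of Theorem~\ref{thm:n_even}, using standardness of the relevant algebras (there are no nonstandard algebras in type $E$), the factorisation property \cite[lem.\ VIII.5.4]{ASS}, and the dimension bound \cite[satz 3.5]{Riedtmann2}.

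The vanishing of negative self-extensions is once more a counting argument: $\Sigma^{-1}$ acts as a horizontal shift by $h_{E_n}/2$ units to the left (with an extra reflection in the central line when $n=6$ and $u$ is odd), and a direct comparison shows that the summands of $\Sigma^{-(u-1)}X$ sit strictly to the left of the forbidden region $H(X)$. The main delicate step, and the one I would approach most carefully, is part~(i) with $u$ odd: there $\stab\,(E_6,\frac{6u+1}{11},2)$ has a stable AR quiver shaped like a M\"obius band rather than a cylinder, so both the cluster tilting check and the negative self-extension vanishing must be carried out with respect to a twisted identification, in analogy with the M\"obius algebra analysis in the proof of Theorem~\ref{thm:Mobius}.
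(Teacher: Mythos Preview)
Your plan is correct and matches the paper's three-step verification of the Keller--Reiten hypotheses.  Two small corrections are worth noting.  First, for the Calabi--Yau dimension of $(E_6,s,2)$ the relevant result in Dugas is \cite[prop.\ 7.4]{Dugas}, not \cite[prop.\ 7.3]{Dugas} (the latter treats type $D$); also, the value $h_{E_6}^{*}$ never actually enters, since for $(E_6,s,1)$ only \cite[thm.\ 6.1(2)]{Dugas} applies.  Second, for the $u$-cluster tilting check the paper does \emph{not} invoke Iyama's coordinate description \cite[sec.\ 4.2]{Iyama}: that description is stated there for types $A$ and $D$ but not in a usable closed form for type $E$.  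Instead the paper computes the Hom-support regions $\{\,t : \underline{\Hom}(X,t)\neq 0\,\}$ directly from the mesh relations (using standardness), obtaining a trapezium for $E_6$ and a parallelogram for $E_7$, $E_8$, and then checks that the translates under $\Sigma$ tile the stable AR quiver; the verification is carried out entirely in $\stab\,A$, without passing through Proposition~\ref{prop:max-orth}.  Your instinct about the twisted identification in the $(E_6,s,2)$ case is right and is exactly what produces the trapezium shapes in the paper's argument.
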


\begin{proof}
As in types A and D, the proof is divided into three sections
verifying the conditions in Keller and Reiten's Morita theorem
\cite[thm.\ 4.2]{KellerReiten2}.

\medskip
{\em Calabi-Yau dimension. }
We must show that the relevant stable module categories have
Calabi-Yau dimension $u + 1$, and again we do so using the results by 
Dugas from \cite{Dugas}.

First, consider the algebras $(E_6,\frac{6u+1}{11},1)$; in particular
$u$ is assumed to be even.  Then \cite[thm.\ 6.1(2)]{Dugas}
applies. Note that the invariants occurring there for type $E_6$ are
given by: The frequency $f=\frac{6u+1}{11}$, the Coxeter number
$h_{E_6}=12$, and $m_{E_6} = h_{E_6}-1=11$. The Calabi-Yau dimension of
$\stab\, (E_6,\frac{6u+1}{11},1)$ is then of the form $2r+1$ where
$$r\equiv -(h_{E_6})^{-1} \,\modulo\, fm_{E_6} = -12^{-1} \,\modulo\, (6u+1)
$$
and $0\le r < 6u+1$. Since $u$ is even by assumption we can consider the integer 
$r=\frac{u}{2}$; this clearly satisfies $12r = 6u \equiv -1
\,\modulo\, (6u+1)$,
and $0\le r < 6u+1$. Therefore the Calabi-Yau dimension of $\stab\, (E_6,\frac{6u+1}{11},1)$
is $2r+1 = u+1$, as desired. 

Secondly, consider the algebras $(E_6,\frac{6u+1}{11},2)$; in
particular $u$ is assumed to be odd.  Then we can apply \cite[prop.\
7.4(1)]{Dugas}. The Calabi-Yau dimension of $\stab\,
(E_6,\frac{6u+1}{11},2)$ is then equal to $2r$ where $r \equiv 5\cdot
12^{-1} \,\modulo\, (6u+1)$ and $0< r < 6u+1$. Setting
$r=\frac{u+1}{2}$ (recall that $u$ is odd by assumption) we
immediately get that $12 r = 6(u+1) \equiv 5 \,\modulo\, (6u+1)$ and
hence the Calabi-Yau dimension is $2r = u+1$, as desired.

Thirdly, consider the algebras $(E_7,\frac{9u+1}{17},1)$. The
Calabi-Yau dimension can again be determined by \cite[thm.\
6.1]{Dugas}. The relevant invariants for type $E_7$ are given by: The
frequency $f=\frac{9u+1}{17}$, the Coxeter number $h_{E_7}=18$ with
its variant $h_{E_7}^* = h_{E_7}/2 = 9$, and $m_{E_7} = h_{E_7}-1=17$.
Note that for type $E_7$ both parts of \cite[thm.\ 6.1]{Dugas} can
possibly apply; we shall show that in either case we get $u+1$ as
Calabi-Yau dimension of the stable module category.

In \cite[thm.\ 6.1(1)]{Dugas} the Calabi-Yau dimension 
$d$ satisfies
$$d\equiv 1 - (h_{E_7}^*)^{-1} \,\modulo\, fm_{E_7}
= 1 - 9^{-1} \,\modulo\, (9u+1)
$$
as well as $0 < d \le 9u+1$. Clearly, $d=u+1$ has these properties,
and hence the Calabi-Yau dimension is $u+1$, as desired.

In \cite[thm.\ 6.1(2)]{Dugas} the Calabi-Yau dimension $d$ has the
form $d=2r+1$ where $r \equiv - 18^{-1} \,\modulo\, (9u+1)$ and $0\le
r < 9u+1$. Note that when \cite[thm.\ 6.1(2)]{Dugas} applies then
$2\nmid f =\frac{9u+1}{17}$ which implies that $u$ is even. Setting
$r=\frac{u}{2}$ we immediately see that $18r \equiv 9u \equiv -1
\,\modulo\, (9u+1)$, i.e.  the Calabi-Yau dimension of the stable
category in this case is also $2r+1 = u+1$, as desired.

Finally, consider the algebras $(E_8,\frac{15u+1}{29},1)$. The
arguments for determining the Calabi-Yau dimension by \cite[thm.\
6.1]{Dugas} are very similar to the previous case of $E_7$. The
relevant invariants for type $E_8$ are: The frequency
$f=\frac{15u+1}{29}$, the Coxeter number $h_{E_8}=30$ with its variant
$h_{E_8}^* = h_{E_8}/2 = 15$, and $m_{E_8} = h_{E_8}-1=29$.  Again,
both parts of \cite[thm.\ 6.1]{Dugas} can apply.  In \cite[thm.\
6.1(1)]{Dugas} the Calabi-Yau dimension $d$ satisfies $d\equiv 1 -
15^{-1} \,\modulo\, (15u+1)$ and $0 < d \le 15u+1$. Clearly, $d = u+1$
has these properties, and hence the Calabi-Yau dimension is $u+1$, as
desired.

In \cite[thm.\ 6.1(2)]{Dugas} the Calabi-Yau dimension $d$ has the
form $d=2r+1$ where $r \equiv - 30^{-1} \,\modulo\, (15u+1)$ and $0\le
r < 15u+1$. As before, when \cite[thm.\ 6.1(2)]{Dugas} applies then
$2\nmid f =\frac{15u+1}{29}$ which implies that $u$ is even. Setting
$r=\frac{u}{2}$ we get that $30r \equiv 15u \equiv -1 \,\modulo\,
(15u+1)$, i.e.  the Calabi-Yau dimension of the stable category in
this case is also $2r+1 = u+1$, as desired.

\medskip
{\em $u$-cluster tilting object. }
To find a $u$-cluster tilting object $X$, as in type $D$, let $X$ be
the direct sum of the indecomposable projective modules and the
modules $x_1, \ldots, x_{6}, x_{7}, x_{8}$ whose positions in the
stable AR quiver of the selfinjective algebra are given by Figure
\ref{fig:ZEn2}, with the convention that the summands $x_7$ and $x_8$
only occur in types $E_7$ and $E_8$ as relevant.
\begin{figure}
\[
  \xymatrix @+0.5pc @!0 {
& & & & & {} \ar[dr] & & x_6 \ar[dr] & & \bullet \\
& & & & \ddots \ar[dr] & & x_5 \ar[dr] \ar[ur] & & *{\bullet} 
\ar[dr] \ar[ur] & \\
& & & \ddots \ar[dr] & \cdots \ar[r] & x_3 \ar[dr] \ar[ur] \ar[r] & x_4 
  \ar[r] & *{\bullet} \ar[dr] \ar[ur] \ar[r] & *{\bullet} \ar[r] & \ddots \\
& & \ddots \ar[dr] & & x_2 \ar[ur]\ar[dr] & & *{\bullet} 
\ar[dr] \ar[ur] & & \ddots & \\
& \ddots \ar[dr] & & x_1 \ar[dr] \ar[ur] & & \bullet \ar[dr] \ar[ur] & & 
 \ddots & & \\
\ddots \ar[dr] & & x_7 \ar[dr] \ar[ur] & & *{\circ} \ar[dr] \ar[ur] & & 
  \ddots & & & \\
& x_8 \ar[ur] & & *{\circ} \ar[ur] & & {\circ} & & & & \\
            }
\]
\caption{The indecomposable modules $x_i$ in Dynkin type $E$} 
\label{fig:ZEn2}
\end{figure}
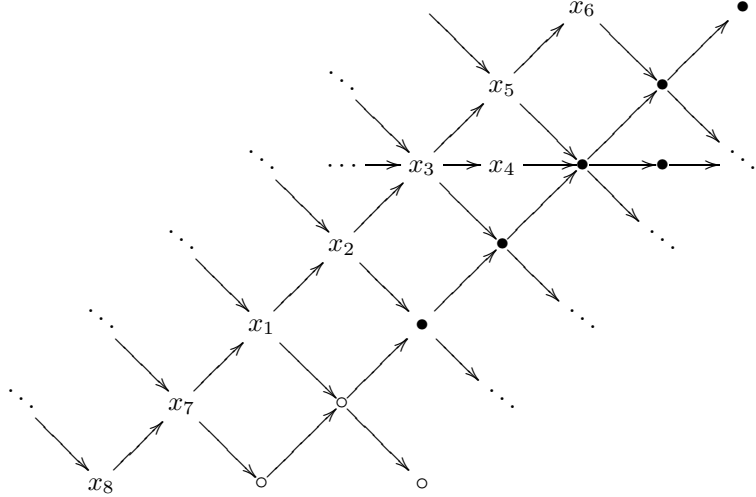

As all algebras in this theorem are standard, their stable module
categories are e\-qui\-va\-lent to the mesh categories of their AR
quivers.
In particular, whether for two objects $v,w$ we have 
$\underline{\Hom}(v,w)\neq 0$ is completely determined by the mesh 
relations. 

For type $E$ and our special choice of object $X$, we get the
following description of the indecomposable objects $t$ such that
$\underline{\Hom}(X,t)\neq 0$ directly from the mesh relations (we
leave the details of the straightforward, though tedious, verification
of these facts to the reader).

In type $E_6$, the objects $t$ are precisely the ones lying
in a trapezium with $X$ as left side and $\tau\Sigma X$ as right 
side; i.e. a trapezium with $X$ as left side, with top side 
containing 4 vertices and bottom side containing 8 vertices
(recall from Section \ref{sec:cluster} that $\Sigma$ is acting
by shifting 6 units to the right and reflecting in the central
line). 

In types $E_7$ and $E_8$ the situation is different.  The
indecomposable objects $t$ such that $\underline{\Hom}(X,t)\neq 0$ are
precisely the ones lying in a parallelogram with $X$ as left side, and
top and bottom sides containing $9$ (for $E_7)$ and $15$ (for $E_8$)
vertices, respectively.

Now we are in a position to show that our chosen object $X$
is indeed a $u$-cluster tilting object. 
We need to describe the objects $t$ with
$\underline{\Hom}(X,\Sigma^it)\neq 0$ for some $i\in\{1,\ldots,u\}$. 
For this purpose, let us consider the regions $H(j)$ of the stable AR quiver
corresponding to indecomposable objects $t$ for which
\begin{equation}
\label{equ:u-cluster}
  \underline{\Hom}(X,\Sigma^{(u+1)-j}t)\neq 0
\end{equation}
where $j$ ranges through $\{ 1, \ldots, u \}$.

For type $E_6$ we suppose that $u\equiv -2 \,\modulo\, 11$.  We have
to distinguish the cases where $u$ is even and odd, respectively.
According to the above description, the regions $H(j)$ look as
follows.

If $u$ is even, then they tile a parallelogram with left side
$\Sigma^{-u}T$ and right side $\tau T$ as in Figure \ref{fig:EnHs}.
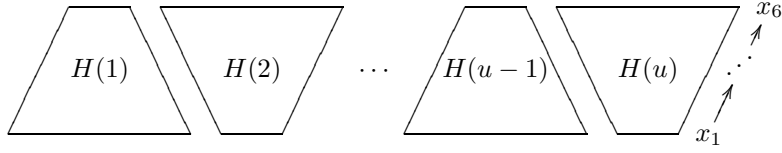
\begin{figure}
\[
  \xymatrix @C-1.05pc @!0 {
  & & *{} \ar@{-}[rr] & & *{} & *{} \ar@{-}[rrrrrr] & & & & & & *{} &&&&*{}\ar@{-}[rr]&&*{}& *{} \ar@{-}[rrrrrr] & & & & & & *{}&x_6\\
  & &&H(1)&&&&&H(2)&&&& \cdots &&&& H(u-1) &&&&& H(u) &&& \adots\ar[ur] \\
  *{} \ar@{-}[uurr] \ar@{-}[rrrrrr] & & & & & & *{} \ar@{-}[uull] & *{} \ar@{-}[uull] \ar@{-}[rr] & & *{} \ar@{-}[uurr] &&&& *{} \ar@{-}[uurr] \ar@{-}[rrrrrr] & & & & & & *{} \ar@{-}[uull] & *{} \ar@{-}[uull] \ar@{-}[rr] & & *{} \ar@{-}[uurr] &x_1\ar[ur]\\
                          }
\]
\caption{The sets $H(i)$ in Dynkin type $E_6$ for $u$ even}
\label{fig:EnHs}
\end{figure}
In particular, the top and bottom sides of this parallelogram contain
$\frac{u}{2}\cdot 12 = 6u$ vertices. But by \cite[cor.\ 1.7]{BS}
(cf. also the remarks at the beginning of this section), the stable
category $\stab\,(E_6,\frac{6u+1}{11},1)$ has precisely $66\cdot
\frac{6u+1}{11} = 6(6u+1)$ indecomposable objects, i.e. the stable AR quiver
(of tree class $E_6$) is identified after $6u+1$ steps.  Hence it
follows that
\begin{center}
$\underline{\Hom}(X,\Sigma^it)=0$ for all $i=1,\ldots,u$ if and only 
if $t \in \add X$.
\end{center}
A very similar argument shows that also
\begin{center}
$\underline{\Hom}(t,\Sigma^iX)=0$ for all $i=1,\ldots,u$ if and only 
if $t \in \add X$.
\end{center}
Thus we have shown that our chosen object $X$ is indeed a $u$-cluster
tilting object in the stable module category $\stab
(E_6,\frac{6u+1}{11},1)$.

If $u$ is odd, then the regions $H(j)$ for $j$ ranging through $\{ 1,
\ldots, u \}$ tile a trapezium with left side $\Sigma^{-u}X$ and right
side $\tau X$ as in Figure \ref{fig:EnHs2}.
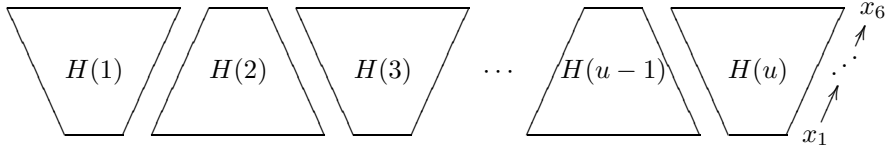
\begin{figure}
\[
  \xymatrix @C-1.1pc @!0 {
*{} \ar@{-}[rrrrrr] & & & & & & *{} & *{} \ar@{-}[rr] & & *{} & *{} \ar@{-}[rrrrrr] & & & & & & *{} &&&&*{}\ar@{-}[rr]&&*{}& *{} \ar@{-}[rrrrrr] & & & & & & *{}&x_6\\
&&&H(1)&&& &&H(2)&&&&&H(3)&&&& \cdots &&&& H(u-1) &&&&& H(u) &&&\adots \ar[ur]&& \\
& & *{} \ar@{-}[rr] \ar@{-}[uull] & & *{} \ar@{-}[uurr] & *{} \ar@{-}[uurr] \ar@{-}[rrrrrr] & & & & & & *{} \ar@{-}[uull] & *{} \ar@{-}[uull] \ar@{-}[rr] & & *{} \ar@{-}[uurr] &&&& *{} \ar@{-}[uurr] \ar@{-}[rrrrrr] & & & & & & *{} \ar@{-}[uull] & *{} \ar@{-}[uull] \ar@{-}[rr] & & *{} \ar@{-}[uurr] & x_1\ar[ur]\\
                         }
\]
\caption{The sets $H(i)$ in Dynkin type $E_6$ for $u$ odd}
\label{fig:EnHs2}
\end{figure}
In particular, the top side of this trapezium contains
$\frac{u-1}{2}\cdot 12 + 8 = 6u + 2$ vertices, and the bottom side
contains $6u - 2$ vertices. In total, this trapezium then contains
$36u$ vertices (e.g.\ note that each of the $u$ smaller trapeziums
with top and bottom sides of length 4 and 8 contains 36 vertices).
But by \cite[cor.\ 1.7]{BS} (cf. also the remarks at the beginning of
this section), the stable category $\stab\,(E_6,\frac{6u+1}{11},2)$
has precisely $6(6u+1)=36u+6$ indecomposable objects. Thus, the above
trapezium fills precisely the region between the parts which become
identified in the stable AR quiver.  Now we can argue as above to
deduce that $X$ is indeed a $u$-cluster tilting object in $\stab
(E_6,\frac{6u+1}{11},2)$.

For types $E_7$ and $E_8$ we suppose that $u\equiv -2 \,\modulo\, 17$
and $u\equiv -2 \,\modulo\, 29$, respectively. Similarly to the above
considerations in type $E_6$ when $u$ is even, the regions $H(j)$ of
indecomposable objects $X$ satisfying equation \eqref{equ:u-cluster}
tile a parallelogram with top and bottom rows containing $9u$ (for
$E_7)$ and $15u$ (for $E_8$) vertices.  A sketch would resemble Figure
\ref{fig:DnHs}.  On the other hand, again by \cite[cor.\ 1.7]{BS}, the
number of indecomposable objects for the stable categories
$\stab(E_7,\frac{9u+1}{17},1)$ and $\stab(E_8,\frac{15u+1}{29},1)$
occurring in the theorem are $119\cdot \frac{9u+1}{17} = 7(9u+1)$ and
$232\cdot \frac{15u+1}{29} = 8(15u+1)$, respectively. Hence, the AR
quivers of these stable categories are identified after $9u+1$ (for
$E_7$) and $15u+1$ units (for $E_8$).  From the sizes of the
parallelograms given above it then follows (just as in the previous
cases) that $X$ is a $u$-cluster tilting object of the relevant stable
categories in types $E_7$ and $E_8$.

The desired fact that the stable endomorphism algebra
$\underline{\End}(X)$ is $kE_n$ is proved verbatim as in type $D$
above.

\medskip
{\em Vanishing of negative self-extensions. }
We must show $\underline{\Hom}(X,\Sigma^{-i} X) = 0$ for $i=1, \ldots,
u-1$.  We have described above the regions in the stable AR quiver where the
modules $t$ are located for which $\underline{\Hom}(X,t)\neq 0$; let
us again denote them by $H(X)$.  These regions $H(X)$ are certain
trapeziums (for $E_6$) or parallelograms (for $E_7$ and $E_8$).

For type $E_6$ we get a situation for which a sketch would resemble
the one above.  The situation for $E_7$ and $E_8$ is completely
analogous, but using parallelograms instead of trapeziums.

As in type $D$, the only way we could fail to get
$\underline{\Hom}(X,\Sigma^{-i} X) = 0$ would be if we took $i$ so
large that $\Sigma^i X$ made it all the way around the stable AR quiver and
reached the forbidden region from the right.

However, this does not happen: In fact, left of $\Sigma^{-(u-1)}X$ we
have the parallelogram (resp. trapezium) $\Sigma^{-u}H(X)$ before
objects get identified in the stable AR quiver. Hence
$\underline{\Hom}(X,\Sigma^{-i} X) = 0$ for $i = 1, \ldots, u - 1$ as
desired. Note that it is crucial that the maximum value for $i$ here
is $u-1$; of course, we have that $\underline{\Hom}(X,\Sigma^{-u} X)
\neq 0$.
\end{proof}

\end{document}